\newcommand{\fg}{\mathfrak g}
\newcommand{\fr}{\mathfrak r}
\newcommand{\fn}{\mathfrak n}
\newcommand{\fm}{\mathfrak m}
\newcommand{\Ad}{{\rm Ad}}
\numberwithin{table}{section}
\theoremstyle{plain}
\theoremstyle{plain}
\newtheorem{thm*}{Theorem}[section]
\newtheorem{prop*}[thm*]{Proposition}
\newtheorem{lem*}[thm*]{Lemma}
\newtheorem{cor*}[thm*]{Corollary}
\theoremstyle{definition}
\newtheorem{def*}{Definition}
\theoremstyle{remark}
\newtheorem{rem*}{Remark}
\begin{document}
\title[On the Beloshapka's rigidity conjecture]{On the Beloshapka's rigidity conjecture for real submanifolds in complex space}
\author{Jan Gregorovic}
\address{Faculty of Mathematics, University of Vienna, Oskar Morgenstern Platz 1, 1090 Wien, Austria}
 \email{jan.gregorovic@seznam.cz}
\subjclass[2010]{}
 \thanks{The author was supported by the project P29468 of the Austrian Science Fund (FWF). The author would like to thank I.G. Kossovskiy for the introduction and the discussion about the topic of the article.}
\maketitle
\begin{abstract} 
A well known Conjecture due to Beloshapka asserts that all totally nondegenerate polynomial models with the length $l\geq 3$ of their Levi-Tanaka algebra are {\em rigid}, that is, any point preserving automorphism of them is completely determined by the restriction of its differential at the fixed point onto the complex tangent space. For the length $l=3$, Beloshapka's Conjecture was proved by Gammel and Kossovskiy in 2006. In this paper, we prove the Conjecture for arbitrary length $l\geq 3$. 

As another application of our method, we construct polynomial models of length $l\geq 3$, which are not totally nondegenerate and admit large groups of point preserving nonlinear automorphisms. 
\end{abstract}

\section{Introduction}

 The concept  of a {\em polynomial model} for real submanifolds in complex space goes back to the classical work of Poincar\'e \cite{poincare}, Cartan \cite{cartan}, Tanaka \cite{T}, Chern and Moser \cite{chern}. In the cited work, it was demonstrated that the key objects associated with the real submanifolds in complex space (such as the automorphism groups, normal forms, canonical connections, etc) can be studied by employing appropriate polynomial models. We emphasize that the work of Poincar\'e-Cartan-Tanaka-Chern-Moser mainly concerned submanifolds of codimension one (real hypersurfaces) satisfying the {\em Levi-nondegeneracy} condition. (Tanaka considered also the cases $k=n^2,\,k=n^2-1$, where $n$ is the CR--dimension and $k$ is the CR--codimension of a generic real submanifold).

An important development towards understanding the concept of a polynomial model for general real submanifolds in complex space was done in the work of Bloom-Graham \cite{BL}. Bloom-Graham consider a real submanifold $M\subset\mathbb C^{n+k}$ (without restrictions on $n,k$ besides their positivity) satisfying the Hormander-Kohn bracket condition: Lie brackets of the local sections of the complex tangent bundle $T^{\mathbb{C}}M:=TM\cap i(TM)$ generate the entire tangent bundle $TM$. With any such $M$, Bloom-Graham associate a sequence of integers $(m_1,\dots,m_k)$ that they call {\em type} (at a point $p\in M$), and further associate a polynomial submanifold $M_0\subset\mathbb C^{n+k}$ given by a system of equations
\begin{equation}\label{bg}
\mbox{Im}(w_1)=P_1(z,\bar z,\mbox{Re}(w)),\cdots, \mbox{Im}(w_k)=P_k(z,\bar z,\mbox{Re}(w)),\quad z\in\mathbb C^n,\,\,w\in\mathbb C^k,
\end{equation} 
where each $P_j$ is a weighted homogeneous polynomial, if one uses appropriate integral weights
$$[z_j]=1,\quad [w_j]=m_j,\quad m_j\geq 2.$$
In what follows, we call the latter submanifolds {\em weighted homogeneous models}.

Systematic study of the Bloom-Graham weighted homogeneous models {\em in the real-analytic case} was initiated  by the school of Vitushkin (see e.g. the surveys \cite{vitushkin}, \cite{obzor}), particularly in the work of Beloshapka. In his pioneering work \cite{belold}, Beloshapka introduced the concept of a {\em nondegenerate quadric} model, which extends the Poincar\'e-Cartan-Tanaka-Chern-Moser concept of a Levi--nondegenerate polynomial model to the case of high codimension ($2\leq k \leq n^2$). In this case, all the above $P_j$ are linearly independent Hermitian forms in $z$ without common kernel. Beloshapka proves the finite-dimensionality of the automorphism group of a nondegenerate quadric models, and as well demonstrates the fundamental bound:
$$\mbox{dim\,Aut}\,(M,p)\leq  \mbox{dim\,Aut}\,(Q,0),$$
where $M$ is a Levi-nondegenerate submanifold with a distinguished point $p\in M$ and $Q$ is its associated quadratic model at $p$. By employing the quadratic models, Beloshapka as well demonstrates {\em the $2$-jet determination} of CR maps between Levi-nondegenerate real submanifolds. 

The cited work of Beloshapka inspired a big interest to studying nondegenerate quadric models. A lot of work since 1990's till present has been dedicated to the classification of quadric models,  studying
their symmetries (in particular, looking for quadrics with {\em nonlinear} symmetries), linearization problems for symmetry groups of Levi-nondegenerate submanifolds, normal forms for Levi-nondegenerate submanifolds. An incomplete list of authors here includes Ezhov and Schmalz (e.g. \cite{ES1,ES2,ES3}), Lamel and Stolovitch \cite{LS17},   Garrity-Mizner \cite{GM,Mi}, Isaev (e.g. \cite{Is}), Isaev-Kaup \cite{IK}, Shevchenko \cite{shev1,shev2}.

At the same time, important developments of the work \cite{belold} for the jet determination problem for mappings between very general real submanifolds in complex space were initiated by the school of Baouendi-Rothschild. Without an ambition to give a complete list of publications here, we mention Baouendi-Ebenfelt-Rothschild \cite{ber0,ber}, Zaitsev \cite{zai1,zai2} (see also references therein), Ebenfelt-Lamel-Zaitsev \cite{elz}.   See also Kolar \cite{kolar} for exact jet determination bounds  in the $\mathbb C^2$ case.

It as well worth mentioning that the geometry of the Beloshapka's nondegenerate quadric models is closely related to that of Bounded Symmetric Domains: see e.g. Kaup-Matsushima-Ociai \cite{km}, Tumanov \cite{tumanov}, Tumanov-Henkin \cite{th}.

We shall emphasize, however, that quadrics are applicable as nondegenerate models only in the range of CR--codimensions
$$1\leq k\leq n^2.$$ That is why in the case $k>n^2$ one needs to consider nondegenerate models with polynomials $P_j$ of higher degree (so-called models of super-high codimension). To study the latter case, Beloshapka introduced in a series of publications (where the final step was made in the paper \cite{B}) the concept of a {\em totally nondegenerate model}. Total nondegeneracy is a (generic!) property of the Bloom-Graham type $(m_1,\dots,m_k)$ of a real submanifold, which is explained in detail in the next section (see Definition \ref{def2}), while an informal definition is as follows: there are no linear dependences between Lie brackets of local sections of  $T^{\mathbb{C}}M$ besides that forced by the axioms within the Lie algebra of germs of complex vector fields on $M$. Then, for any $n,k>0$ and any totally nondegenerate model with given $n,k$, Beloshapka constructs a family of weighted homogeneous models. Germs of these models approximate in a natural way germs of  arbitrary totally nondegenerate submanifolds, and one has the bound
 $$\mbox{dim\,Aut}\,(M,p)\leq  \mbox{dim\,Aut}\,(M_0,0),$$
where $M$ is a Levi-nondegenerate submanifold with a distinguished point $p$ and $M_0$ is its associated model at $p$. (The finite dimensionality of  $\mbox{Aut}\,(M_0,0)$ can be verified also from e.g. \cite{ber}, though Beloshapka provides in \cite{B} more precise bounds specific for the totally nondegenerate case). An important invariant of the totally nondegenerate models is their {\em length} $l$. The latter can be viewed, on one hand, as the maximal possible depth of a Lie bracket in the definition of the Bloom-Graham type, and at the same time, $l$ is the maximal possible weighted degree of a polynomial   in \eqref{bg}. The length $l$ is precisely the length of the {\em Levi-Tanaka algebra} (see the next section). In terms of length, Beloshapka's totally nondegenerate models look at a glance as 
\begin{gather*}
\mbox{Im}(w_1)=P^1_{2}(z,\bar z),...,\mbox{Im}(w_{n^2})=P^{n^2}_{2}(z,\bar z),\\ 
...\\
\mbox{Im}(w_{k-s+1}=P^1_{l}(z,\bar z,\mbox{Re}(w)),...,\mbox{Im}(w_k)=P^s_{l}(z,\bar z,\mbox{Re}(w)),\\ 
z\in\mathbb C^n,\,\,w\in\mathbb C^k
\end{gather*}
(here each $P^a_{b}$ has weighted degree $b$). For more details we refer, again, to the next section.

Beloshapka's totally nondegenerate models have been studied  intensively in the last 20 years (e.g. Beloshapka-Ezhov-Schmalz \cite{bes}, Gammel-Kossovskiy \cite{G}, Kossovskiy \cite{4degree}, Shananina (\cite{shan1,shan2}), Mamai \cite{mamai}, Sabzevari \cite{sabzevari}). See also in this regards the survey of Kolar-Kossovskiy-Zaitsev \cite{kkz}. Somewhat surprisingly, despite of all efforts, one could never construct a totally nondegenerate model with nonlinear origin preserving automorphisms.  It has been a long standing problem whether such models exist, and this led to

\smallskip

\noindent{ \bf Beloshapka's Rigidity Conjecture (\cite{B},\cite{obzor}).} All totally nondegenerate models of the length $l\geq 3$ are {\em rigid}, i.e., admit no nonlinear origin preserving automorphisms.

\smallskip

Alternatively, the Beloshapka's Conjecture can be formulated as the nonexistence of totally nondegenerate models with length $l\geq 3$ admitting infinitesimal CR automorphisms that have positive weighted degree. We note that for $l=2$ (the case of quadrics) such nonlinear automorphisms often exist (see in e.g. \cite{chern} for the case of hyperquadrics, or \cite{obzor} for numerous examples in higher codimension). 

\medskip

For the length $l=3$ the Beloshapka's Conjecture was proved by Gammel and Kossovskiy in \cite{G}. It was as well proved by Kossovskiy for a subclass of models of length $4$ in \cite{4degree}. 

\smallskip

We also mention that for the special CR--dimension $n=1$ rigidity holds automatically as well, as was observed by Kruglikov \cite{krug} and can be easily deduced from results in \cite{M}. 

\smallskip

The mail goal of this paper is to solve Beloshapka's Conjecture in its full generality in the affirmative.

\smallskip

\noindent{\bf Main Theorem.} {\em Any totally nondegenerate model $M_0$ of length $l\geq 3$ and arbitrary CR--dimension $n>0$ and CR--codimension $k>0$ is rigid, i.e. the stability group of the origin contains only linear automorphisms (which are then determined by their restrictions on the complex tangent space at $0$ to $M_0$).}

\smallskip

\noindent{\bf Corollary.} {\em Let $M,N$ be two real-analytic totally nondegenerate real submanifolds in complex space with a distinguished point $p,q$. Let $M,N$ have length $l\geq 3$ at $p,q$, respectively. Then any local automorphism $H: M\to N$ near $p$ with $H(p)=q$ is uniquely determined by the restrictions of its differential $T_pH$ onto the complex tangent space $T_p^{\mathbb{C}}M$.}

\smallskip

We finally mention that another well known question of Beloshapka asks about the existence of {\em non-rigid} and possibly not totally nondegenerate polynomial models of length $l\geq 3$ with finite dimensional automorphism groups. We give in this paper an answer to the latter question as well, by constructing (not totally nondegnerate) polynomial models with large nonlinear stability groups of the origin. Interestingly, these examples exhibit {\em parabolic geometries} (see e.g. \cite{P}) within the class of weighted homogeneous models.

{\em Acknowledgement}. After posting the article on arXiv, I noticed that there is a paper \cite{sabzevari2} by Sabzevari and Spiro that appeared practically at the same time that is independently proving the Main Theorem by slightly different methods.

\section{Preliminaries and detailed statement of results}\label{prel}

\subsection{Preliminaries}

The Beloshapka's totally nondegenerate models are not generic weighted homogeneous models, e.g., in \cite{B}, it is shown that Beloshapka's totally nondegenerate models are always homogeneous w.r.t. the CR automorphisms group. Therefore we will consider only a particular class of weighted homogeneous models. However, it is not convenient for us to restrict ourselves to a class of weighted homogeneous models in a particular normal form suggested for the weighted homogeneous models in \cite{BL} or for the totally nondegenerate models in \cite{B}. The results and terminology in \cite{B} suggest us to use the following class of weighted homogeneous models.

\begin{def*}
We say that a CR submanifold $M_0\subset \mathbb{C}^{n+k}$ given by a system of equations (\ref{bg}) is a {\em model surface} of codimension $k$ if the following conditions are satisfied:
\begin{enumerate}
\item The functions $P_j(z,\bar z,\mbox{Re}(w))$ are polynomials and after the assignment of weight $[z_i]=1$ to all $z$ variables there is a unique assignment of integral weights $[w_i]=m_i,\ m_i\geq 2$ to each $w$ variable under which all polynomials $P_j(z,\bar z,\mbox{Re}(w))$ are weighted homogeneous.
\item The Lie brackets of the local sections of the complex tangent bundle $T^{\mathbb{C}}M_0$ generate the entire tangent bundle $TM_0$.
\item The submanifold $M_0$ is homogeneous w.r.t. the CR automorphism group.
\item The submanifold $M_0$ is Levi--nondegenerate at $0$.
\end{enumerate}

We say that that the sequence $(m_1,\dots,m_k)$ is the {\em Bloom--Graham type} of $M_0$. 
\end{def*}

\begin{def*}\label{def2}
We say that a model surface $M_0\subset \mathbb{C}^{n+k}$ of codimension $k$ is {\em totally nondegenerate} if its Bloom--Graham type is (lexicographically) minimal among all model surfaces of $\mathbb{C}^{n+k}$ of the same codimension $k$.
\end{def*}

According to \cite[Theorem 6.2]{BL}, each model surface $M_0$ can be transformed by weighed degree preserving holomorphic transformation into a model surface in a normal (standard) form. It is a simple observation that the Beloshapka's totally nondegenerate models from \cite{BL} are exactly the normal forms of the totally nondegenerate model surfaces.

The homogeneity of the model surface $M_0$ w.r.t. the CR automorphism group implies that each point $p\in M_0$ is {\em regular}, i.e., all subbundles $T^{-i}M_0$ of $TM_0$ generated by $i$ sections of $T^{\mathbb C}M_0$ have constant rank. At a regular point $p$ of a CR submanifold $M$, Tanaka defines in \cite{T} a {\em Levi--Tanaka algebra} $(\fm,I)$ of $l$--th type of $M$ at $p$ that is 
\begin{enumerate}
\item a graded Lie algebra $$\fm=\fg_{-l}\oplus \dots \oplus \fg_{-1}$$ such that $\fm$ is generated by $\fg_{-1}$, the Lie bracket $\fg_{-1}\otimes \fg_{-1}\to \fg_{-2}$ is non--degenerate and $\fg_{-l}\neq 0$, and \item a complex structure $I$ on $\fg_{-1}$ such that $$[I(X),I(Y)]=[X,Y]$$ holds for all $X,Y\in \fg_{-1}.$
\end{enumerate} We recall the full details on the construction of  the Levi--Tanaka algebra in Section \ref{Sec21}, however, now we only mention that the vector spaces $\fg_{-i}$ are defined as the quotients $T^{-i}_pM_0/T^{-i+1}_pM_0$. Therefore $dim(\fg_{-i})$ for $i>1$ is the number of $i$ in the Bloom--Graham type of $M_0$ at $p$ and the total nondegeneracy means that $dim(\fg_{-i})$ for $i<l$ is the maximal possible. It is well know from \cite{T} and the theory of Lie algebras, which Levi--Tanaka algebras have this property, see Section \ref{Sec22}. This immediately provides the informal characterization of the total nondegeneracy from the Introduction.

A pair $(\fm,I)$ with the properties (1) and (2) of the Levi--Tanaka algebra is called a {\em non--degenerate pseudo--complex fundamental graded Lie algebra} $(\fm,I)$ of $l$--th type. Tanaka shows in \cite{T} that for each non--degenerate pseudo--complex fundamental graded Lie algebra $(\fm,I)$ there is a flat model that has Levi--Tanaka algebra $(\fm,I)$ at each point. The flat model is the nilpotent Lie group $\exp(\fm)$ together with the left invariant distribution and the complex structure given by $\fg_{-1}$ and $I$. In \cite{N}, Naruki shows that, for the flat model $\exp(\fm)$, there is an CR embedding $$\phi:\fm\to \mathbb{C}^N$$ for $N=dim_\mathbb{R}(\fm)-dim_\mathbb{C}(\fg_{-1})$. The CR submanifold $\phi(\fm)\subset \mathbb{C}^N$ is called a {\em standard real submanifold} corresponding to the non--degenerate pseudo--complex fundamental graded Lie algebra $(\fm,I)$. In Section \ref{Sec23}, we show that each standard real submanifold $\phi(\fm)\subset \mathbb{C}^N$ is in fact a model surface, i.e., each non--degenerate pseudo--complex fundamental graded Lie algebra is a Levi Tanaka algebra at $0$ of a model surface. One of the results of this article, that we present in the next section is that the converse statement is also true, each model surface $M_0$ is isomorphic to the standard real submanifold $\phi(\fm)\subset \mathbb{C}^N$ corresponding to the Levi--Tanaka algebra $(\fm,I)$ of $M_0$ at $0$.

\subsection{Detailed statement of results}

Our first result (Theorem \ref{Thm1}) relates the standard real submanifolds and the model surfaces. In particular, it provides description of the totally nondegenerate model surfaces using the universal Levi--Tanaka algebras of $l$--th type, see Section \ref{Sec22}. Moreover, it shows that the standard real submanifold $\phi(\fm)\subset \mathbb{C}^N$ corresponding to Levi--Tanaka algebra $(\fm,I)$ of $M_0$ at $0$ provides a normal form for the model surface $M_0$ (different than \cite{BL}), see Section \ref{Sec23} for the details. Finally, it allows to explicitly compute the infinitesimal CR automorphims of the model surfaces in the coordinates of this normal form, see Section \ref{Sec24}.

\begin{thm*}\label{Thm1}
Let $M$ be a CR submanifold in $\mathbb{C}^{n+k}$ of CR--dimension $n$ containing $0$. Then the following conditions are equivalent:
\begin{enumerate}
\item $M$ is isomorphic to a model surface $M_0$ of codimension $k$.
\item $M$ is isomorphic to the standard real submanifold $\phi(\fm)\subset \mathbb{C}^{n+k}$ corresponding to the Levi--Tanaka algebra $(\fm,I)$ of $l$--th type of $M$ at $0$.
\end{enumerate}

Moreover, if one of the conditions (1) and (2) holds, then $M_0$ is a totally nondegenerate model surface if and only if the Levi--Tanaka algebra of $M$ at $0$ is a quotient of the universal Levi--Tanaka algebra of $l$--th type by a linear subspace in $-l$--th grading component.
\end{thm*}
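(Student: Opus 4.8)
The plan is to run the circle $(1)\Rightarrow(2)\Rightarrow(1)$ — the first implication being the substantial new one — and then to read off the ``moreover'' part from the classification of which graded Lie algebras maximize their layer dimensions. I would prove $(2)\Rightarrow(1)$ by simply checking the four defining conditions of a model surface for $M_0:=\phi(\fm)\subset\C^{n+k}$, where $(\fm,I)$ is a non--degenerate pseudo--complex fundamental graded Lie algebra of $l$--th type. Bracket generation and Levi--nondegeneracy at $0$ are inherited from the two axioms of $(\fm,I)$ — $\fm$ is generated by $\fg_{-1}$ and the bracket $\fg_{-1}\otimes\fg_{-1}\to\fg_{-2}$ is non--degenerate — once one observes that Naruki's map is a CR embedding restricting to a complex--linear isomorphism $\fg_{-1}\xrightarrow{\sim}T^{\C}_0\phi(\fm)$ that intertwines $I$ with the ambient complex structure. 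CR--homogeneity holds because Naruki's embedding is $\exp(\fm)$--equivariant, so the nilpotent group $\exp(\fm)$ acts transitively on $\phi(\fm)$ by CR automorphisms. Finally, assigning to the coordinate dual to $\fg_{-i}$ the weight $i$ (and weight $1$ to the $\fg_{-1}$--coordinates) makes the equations of $\phi(\fm)$ weighted homogeneous, and uniqueness of this weight assignment is forced because $\fg_{-i}$ is the intrinsic layer $T^{-i}_0\phi(\fm)/T^{-i+1}_0\phi(\fm)$ of the canonical filtration. This is essentially what is carried out in Section \ref{Sec23}.

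For $(1)\Rightarrow(2)$, let $M_0$ be a model surface with Levi--Tanaka algebra $(\fm,I)$ at $0$. Since $M_0$ is weighted homogeneous, its Lie algebra $\fa$ of infinitesimal CR automorphisms is graded by weight, $\fa=\fa_{<0}\oplus\fa_0\oplus\fa_{>0}$. The key step is to show that evaluation at $0$ is a graded linear isomorphism $\fa_{<0}\xrightarrow{\sim}T_0M_0$ which carries the bracket of vector fields to the Levi--Tanaka bracket: surjectivity onto $T_0M_0$ uses CR--homogeneity, the identification of the bracket uses bracket generation and Levi--nondegeneracy at $0$, and injectivity uses the standard fact for weighted homogeneous models that a negatively weighted infinitesimal CR automorphism vanishing at $0$ vanishes identically. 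Granting this, $\fa_{<0}\cong\fm$ and $\exp(\fa_{<0})$ acts locally freely and transitively near $0$, so $M_0$ is locally CR--isomorphic to the flat model $\exp(\fm)$ of $(\fm,I)$, with complex structure on $T^{\C}_0M_0=T^{-1}_0M_0$ transported from $I$; by Naruki's theorem \cite{N} the flat model is in turn CR--isomorphic to $\phi(\fm)\subset\C^{N}$ with $N=\dim_\R\fm-\dim_\C\fg_{-1}=(2n+k)-n=n+k$. Composing the two isomorphisms gives $(1)\Rightarrow(2)$. (Alternatively one may first put $M_0$ into the Bloom--Graham normal form of \cite[Theorem~6.2]{BL} and verify directly that, under CR--homogeneity, this form is reconstructed from the structure constants of $\fm$ and coincides with $\phi(\fm)$; this is the computational realization of the same argument.) I expect the identification $\fa_{<0}\cong\fm$ — equivalently, the fact that a CR--homogeneous model surface is flat, i.e. equals the flat model of its symbol — to be the main obstacle, since it is precisely here that homogeneity, bracket generation and Levi--nondegeneracy must be combined to exclude spurious negatively graded symmetries and to force freeness of the action.

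For the ``moreover'' part, assume (1)/(2) and set $d_i:=\dim_\R\fg_{-i}$, so $d_1=2n$ and, for $i\geq2$, $d_i$ is the number of occurrences of $i$ in the Bloom--Graham type (written in non--decreasing order), whence $k=\sum_{i\geq2}d_i$. By the structure theory recalled in Section \ref{Sec22}, among non--degenerate pseudo--complex fundamental graded Lie algebras of $l$--th type with $\dim_\C\fg_{-1}=n$ there is a universal one $\fm_{\mathrm{univ}}$, which simultaneously maximizes every layer dimension, and every such algebra is the quotient of $\fm_{\mathrm{univ}}$ by a graded ideal $\fr=\bigoplus_{i\geq2}\fr_{-i}$ with $\fr_{-i}\subseteq\fg^{\mathrm{univ}}_{-i}$ and $\fr_{-l}\neq\fg^{\mathrm{univ}}_{-l}$. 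Lexicographic minimality of the Bloom--Graham type among model surfaces of codimension $k$ means exactly that the type contains the largest possible number of $2$'s, then of $3$'s, and so on through $(l-1)$'s (the remaining $k-\sum_{2\le i<l}d_i$ entries being then forced to equal $l$, since $k=\sum_{i\geq2}d_i$ and the length is $l$); equivalently $d_i=\dim_\R\fg^{\mathrm{univ}}_{-i}$ for all $i<l$. Writing $\fm=\fm_{\mathrm{univ}}/\fr$ and using $d_i=\dim\fg^{\mathrm{univ}}_{-i}-\dim\fr_{-i}$, a downward induction on $i$ shows that $d_i$ is maximal for every $i<l$ if and only if $\fr_{-i}=0$ for every $i<l$, i.e. $\fr=\fr_{-l}\subseteq\fg^{\mathrm{univ}}_{-l}$, and any linear subspace of the top layer $\fg^{\mathrm{univ}}_{-l}$ is automatically an ideal. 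Thus $M_0$ is totally nondegenerate if and only if $(\fm,I)$ is the quotient of the universal Levi--Tanaka algebra of $l$--th type by a linear subspace of its $-l$--th grading component; for the converse implication one appeals to $(2)\Rightarrow(1)$, since for any such quotient $\phi(\fm)$ is a model surface whose type realizes the maximal number of $2$'s, $3$'s, \dots, $(l-1)$'s and is therefore totally nondegenerate.
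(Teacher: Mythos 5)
Your proposal is correct and follows essentially the same route as the paper: $(2)\Rightarrow(1)$ by verifying the model-surface axioms for Naruki's embedding (the paper's Section \ref{Sec23} and Corollary \ref{cor24}), $(1)\Rightarrow(2)$ by using the dilation grading to identify the negatively weighted infinitesimal CR automorphisms with $(\fm,I)$ and then invoking homogeneity, real analyticity and Naruki's realization of the flat model (the paper phrases this via the injection $\mathfrak{inf}(M_0)\to\fg$ into the Tanaka prolongation), and the ``moreover'' part by reading off layer dimensions of quotients of the universal algebra, where any subspace of the top layer is automatically an ideal. Your write-up is somewhat more explicit than the paper's (which leaves the ``moreover'' part as a direct consequence), but there is no substantive difference in method.
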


We prove the Theorem \ref{Thm1} in Section \ref{Sec3}.

The Theorem \ref{Thm1} and results from \cite{T} reduce the computation of the nonlinear CR automorphims of the model surfaces $M_0$ to the computation of the Tanaka prolongation $\fg$ of the Levi--Tanaka algebra $(\fm,I)$ of $M_0$ at $0$. Moreover, we show in Section \ref{Sec24}, that on the standard real submanifold $\phi(\fm)\subset \mathbb{C}^N$, there are the infinitesimal CR automorphisms corresponding to elements of $\fg_i\subset \fg$ have weighted degree $i$ (as holomorphic vector fields on $\mathbb{C}^N$). Thus the positive part $\fg_+= \fg_{1}\oplus \dots \oplus \fg_l$ of the Tanaka prolongation $\fg$ corresponds to the space of infinitesimal CR automorphisms of positive weighted degree, i.e., to the nonlinear CR automorphisms in $Aut(M_0,0)$. Therefore we can rephrase the Main Theorem (the Beloshapka's conjecture) as:

\begin{thm*}\label{Thm3}
It holds $\fg_+=0$ for the Tanaka prolongation of the Levi--Tanaka algebra at $0$ of the totally nondegenerate model surface $M_0$ of codimension $k$ and length $l\geq 3$.
\end{thm*}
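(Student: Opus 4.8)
The plan is to prove $\fg_+=0$ by analysing the Tanaka prolongation $\fg=\fg_{-l}\oplus\dots\oplus\fg_{-1}\oplus\fg_0\oplus\fg_1\oplus\dots$ grading component by grading component, exploiting the maximality (universality) of the negative part $\fm$ guaranteed by Theorem \ref{Thm1}. The key structural fact is that an element $\ph\in\fg_i$ with $i\geq 1$ is a degree-$i$ derivation of $\fm$ into the prolongation, and since $\fm$ is generated by $\fg_{-1}$, such $\ph$ is completely determined by its restriction $\ph|_{\fg_{-1}}:\fg_{-1}\to\fg_{-1+i}$. I would first reduce to showing $\fg_1=0$: indeed, by the standard Tanaka-prolongation argument, if $\fg_1=0$ then $\fg_i=0$ for all $i\geq 1$, because any $\ph\in\fg_i$ would bracket with $\fg_{-1}$ into $\fg_{i-1}$, and an inductive descent (using that $[\fg_{-1},\ph]=0$ forces $\ph=0$ once $i\geq 1$ by non-degeneracy of the bracket $\fg_{-1}\otimes\fg_{-1}\to\fg_{-2}$ propagated upward) collapses everything. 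So the heart of the matter is $\fg_1=0$, i.e.\ there is no nonzero degree-one derivation $\ph$ with $\ph(\fg_{-j})\subseteq\fg_{-j+1}$.

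The main step is then to show $\fg_1=0$ using the \emph{complex structure} $I$ together with total nondegeneracy. An element $\ph\in\fg_1$ restricts to a linear map $\ph_{-1}=\ph|_{\fg_{-1}}:\fg_{-1}\to\fg_0$, but what matters is the induced maps $\ph_{-j}:\fg_{-j}\to\fg_{-j+1}$ for $j=2,\dots,l$, obtained from $\ph_{-1}$ by the derivation property along iterated brackets. The crucial constraint is the compatibility with $I$: the bracket identity $[IX,IY]=[X,Y]$ on $\fg_{-1}$ (and its consequences in $\fg_{-2}$, where there is a well-defined $J$ coming from $I$ via the Levi form being the imaginary part of a Hermitian form) must be preserved, in an infinitesimal sense, by $\ph$. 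Concretely, because $M_0$ is a standard real submanifold coming from a pseudo-complex fundamental graded Lie algebra, the whole algebra $\fm$ and its prolongation sit inside the complexification, and $\ph\in\fg_1$ must commute appropriately with the grading element and respect the reality/$I$-structure; writing $\ph_{-1}$ in terms of its $I$-linear and $I$-antilinear parts and feeding this through the homogeneous polynomial equations \eqref{bg} (equivalently through the bracket relations at weight $\leq l$) should force $\ph_{-1}=0$. Here the hypothesis $l\geq 3$ enters decisively: for $l=2$ (quadrics) the degree-one derivations need not vanish, and the obstruction to extending a nonzero $\ph_{-1}$ only appears once brackets of depth $3$ are available, exactly as in the length-$3$ case treated by Gammel--Kossovskiy \cite{G}.

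The implementation I would carry out is: (a) use the universality statement of Theorem \ref{Thm1} to work with the universal Levi--Tanaka algebra $(\fm^{\mathrm{univ}},I)$ of length $l$ (quotients by a subspace of $\fg_{-l}$ only make the vanishing easier, since $\fg_1$ can only shrink under such quotients — this monotonicity needs a short separate argument, or one argues directly on the quotient), (b) describe $\fg_{-1}$ and $\fg_{-2}$ explicitly: $\fg_{-1}=V\oplus\bar V$-type data with $I$ the standard complex structure, and $\fg_{-2}$ realised as $\Lambda^2$-type brackets, so that the Levi form is literally a family of Hermitian forms without common kernel, (c) take $\ph\in\fg_1$, split $\ph_{-1}=A+B$ into $I$-commuting and $I$-anticommuting parts, and compute $\ph_{-2}$ on a bracket $[X,Y]$; the $I$-compatibility of $\fg_{-2}$ forces the $B$-part to satisfy an overdetermined Hermitian-symmetry condition that already kills $B$, while the remaining $A$-part is then a genuine graded derivation commuting with $I$, hence (d) $A$ exponentiates to a one-parameter group of \emph{linear} CR automorphisms — but $\ph$ has strictly positive weighted degree $1$, so as a vector field on $\C^{n+k}$ it is non-linear; reconciling "$\ph$ is a weight-$+1$ vector field" with "$\ph$ acts on $\fg_{-1}$ by something $I$-linear valued in $\fg_0$" forces $A=0$ unless a depth-$3$ bracket relation is violated. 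I expect the main obstacle to be step (c)–(d): cleanly extracting, from the infinitely many bracket relations that hold in the universal algebra up to weight $l$, the \emph{finite} list of relations at weights $2$ and $3$ that a weight-$1$ derivation must preserve, and showing that this finite list already has only the zero solution — this is where the combinatorics of iterated brackets and the interplay with $I$ is genuinely delicate, and where the $l\geq 3$ threshold is used in an essential, non-formal way.
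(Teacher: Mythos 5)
Your reduction of $\fg_+=0$ to $\fg_1=0$ via transitivity of the Tanaka prolongation is fine and is also how the paper concludes, but the two central steps of your plan are gaps rather than proofs. First, the passage to the universal Levi--Tanaka algebra rests on the claim that quotienting $\fm$ by a subspace of $\fg_{-l}$ can only shrink $\fg_1$; this monotonicity is unjustified (a derivation of the quotient need not lift to the universal algebra, the two prolongations have different $\fg_0$'s, and non-universal fundamental algebras routinely have \emph{larger} prolongations, as the examples in Sections \ref{Sec51} and \ref{Sec6} show), and the alternative ``argue directly on the quotient'' is not carried out. Second, and more seriously, the heart of the matter --- that a weight-one derivation compatible with $I$ must vanish --- is only conjectured in your steps (c)--(d): you explicitly defer the extraction of the relations that would kill $\ph|_{\fg_{-1}}$, and step (d) as written is not an argument (there is no tension between $\ph$ being a weight-$+1$ vector field and $\ph|_{\fg_{-1}}$ taking values in $\fg_0$; that is precisely the definition of an element of $\fg_1$, so nothing forces $A=0$). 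There is also no evidence that relations of weighted degree $\le 3$ suffice once $l>3$: the paper's hypothesis in Theorem \ref{Thm2} requires universality of $\fg_{-2},\dots,\fg_{-K}$ for $K\ge (l+2)/2$, i.e.\ bracket relations of depth growing with $l$, exactly because the contradiction is obtained by applying $\ad(Y)^{s-1}$ to the vanishing element $a^{l}b\in\fg_{-l-1}$ and landing in depth roughly $l/2+1$.

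You are also missing the two structural inputs that make the paper's argument close. It first invokes the graded Levi decomposition $\fg=\frak{s}\oplus\fr$ of Medori--Nacinovich together with the classification of pseudo-complex fundamental algebras with semisimple prolongation (Table \ref{T1}) to show $\frak{s}_{-1}=0$: universality of $\fg_{-3}$ forces $\dim\frak{s}_{-3}$ to exceed what any entry of the classification allows. This yields that the subalgebra generated by $\fg_{-1}\oplus\fg_1$ is nilpotent, which is what licenses the subsequent positivity and linear-independence argument with Lyndon words (Lemmas \ref{tec1} and \ref{tec}). Without a substitute for these ingredients --- in particular without any control on $\fg_0$, which need not be reductive --- your splitting of $\ph|_{\fg_{-1}}$ into $I$-linear and $I$-antilinear parts has nothing to push against, and the proposal does not constitute a proof.
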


Due to the results indicated in the Introduction, it suffices to resolve the Beloshapka's conjecture for $l>3$. In order to do this, we prove in the Section \ref{Sec4} the following result that holds for a larger class of model surfaces.

\begin{thm*}\label{Thm2}
Let $M_0$ be a model surface in $\mathbb{C}^{n+k}$ of codimesion $k$ with length $l>3$. If one of the following equivalent conditions is satisfied for some $K$ such that $l+2\leq 2K$:
\begin{enumerate}
\item the part $(m_1,\dots,m_c)$ of Bloom--Graham type $(m_1,\dots,m_k)$ corresponding to all variables $w$ of degree less of equal $K$ is (lexicographically) minimal among all model surfaces in $\mathbb{C}^{n+k}$ of codimension $k$, or
\item  the dimensions of spaces $\fg_{-2}, \dots,\fg_{-K}$ in the Levi--Tanaka algebra of $l$--th type of $M_0$ at $0$ are the same as the dimensions of the corresponding grading components of the universal Levi--Tanaka algebra.
\end{enumerate}
then it holds $\fg_+=0$ for the Tanaka prolongation $\fg$ of Levi--Tanaka algebra of $M_0$ at $0$.
\end{thm*}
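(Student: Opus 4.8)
The plan is to work entirely on the algebraic side, computing the Tanaka prolongation $\fg = \fg_{-l}\oplus\dots\oplus\fg_{-1}\oplus\fg_0\oplus\fg_1\oplus\dots$ of the Levi--Tanaka algebra $(\fm,I)$ of $M_0$ at $0$, and to show that the hypothesis on the low-degree part of $\fm$ forces every positive graded piece $\fg_i$ ($i\geq 1$) to vanish. First I would dispose of the equivalence of conditions (1) and (2): condition (1) is about the Bloom--Graham type truncated at degree $K$, and since $\dim(\fg_{-i})$ equals the multiplicity of $i$ in the type, lexicographic minimality of $(m_1,\dots,m_c)$ is the same as saying $\dim\fg_{-2},\dots,\dim\fg_{-K}$ are maximal, i.e. agree with the universal Levi--Tanaka algebra of $l$-th type; this is essentially the content of the last clause of Theorem \ref{Thm1} restricted to degrees $\le K$, combined with the structure theory of universal Levi--Tanaka algebras recalled in Section \ref{Sec22}. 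So from now on I would assume (2).

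The core is a prolongation-rigidity argument. A nonzero element $\xi\in\fg_k$ with $k\geq 1$ is, by definition of the Tanaka prolongation, a degree $k$ derivation of $\fm$ determined by the induced maps $\fg_{-1}\to\fg_{k-1}$, $\fg_{-2}\to\fg_{k-2}$, etc., compatibly with all brackets. The key structural input is that $\fm$ is generated in degree $-1$ and, crucially, that the bracket $\fg_{-1}\otimes\fg_{-j}\to\fg_{-j-1}$ is surjective for $j<l$ (this is where total nondegeneracy up to degree $K$, i.e. condition (2), enters: the relevant brackets hit the \emph{full} universal spaces). Therefore $\xi$ is already determined by its component $\fg_{-1}\to\fg_{k-1}$ together with the requirement that it annihilate all the relations of $\fm$; the degree $-l$ part, which is where $M_0$ may differ from the universal model, only imposes \emph{additional} constraints on $\xi$, never extra freedom. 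The strategy is then: (a) bound $\fg_k$ for the universal Levi--Tanaka algebra of $l$-th type first; (b) show the bound propagates — specifically, that $\fg_1\to\fg_0\to\dots$ rigidity is controlled by $\fg_{-1}$ and that the hypothesis $l+2\leq 2K$ guarantees enough of the universal structure is present to run the argument; (c) descend to the quotient $M_0$. For step (a), the heart is the observation that in the universal Levi--Tanaka algebra the positive prolongation vanishes already for $l\ge 3$ — one uses the compatibility of a prospective $\xi\in\fg_k$ with the complex structure $I$ on $\fg_{-1}$ (the condition $[IX,IY]=[X,Y]$) and the fact that the induced map on $\fg_{-1}$ must be complex-linear or anti-linear in a way that, pushed up through the free-nilpotent-like structure, forces it to be zero; the condition $l+2\le 2K$ is exactly what is needed so that a degree $k\ge 1$ derivation's action on $\fg_{-1}$, iterated, lands in a range (degrees $\ge -l+2$, roughly) where we still have the universal/maximal dimensions and hence the surjectivity of brackets that kills it.

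The main obstacle I expect is controlling the interaction between the positive prolongation $\fg_+$ and the degree $-l$ component $\fg_{-l}$, which is the \emph{only} place the model surface $M_0$ is allowed to be non-universal (a quotient by a subspace of $\fg_{-l}$). One must check that passing to such a quotient does not create new prolongation elements that were obstructed in the universal model — a priori quotienting can both kill and create derivations. The way around this is to note that a degree $k\ge 1$ derivation of the quotient lifts (non-uniquely) to a degree $k$ \emph{pseudo-derivation} of the universal $\fm$ that need not preserve the kernel $\mathfrak{r}\subseteq\fg_{-l}$, but whose composition with the projection does; then one shows that the component $\fg_{-1}\to\fg_{k-1}$ of this lift is still forced to vanish by the relations living in degrees $\ge -l+1$ (all of which are present and universal, by $l+2\le 2K$ and condition (2)), and vanishing of that component already forces the whole derivation to vanish because $\fm$ is generated in degree $-1$. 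Thus the kernel $\mathfrak{r}$ never gets a chance to contribute. Assembling (a)–(c) and this lifting lemma gives $\fg_+=0$, and combined with Theorem \ref{Thm1} (which identifies $\fg_+$ with the nonlinear infinitesimal CR automorphisms of $M_0$) this will also immediately yield Theorem \ref{Thm3}, hence the Main Theorem for $l>3$; the case $l=3$ is covered by \cite{G} as noted in the Introduction.
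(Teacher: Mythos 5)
There is a genuine gap, and it sits exactly at the heart of the theorem. Your step (a) --- that the positive prolongation of the universal Levi--Tanaka algebra vanishes for $l\geq 3$ because complex-linearity of $\xi|_{\fg_{-1}}$ ``pushed up through the free-nilpotent-like structure'' forces $\xi=0$ --- is asserted, not argued, and the mechanism you offer cannot work as stated. The surjectivity of the brackets $\fg_{-1}\otimes\fg_{-j}\to\fg_{-j-1}$ holds for \emph{every} fundamental graded Lie algebra (it is the definition of being generated in degree $-1$), so it cannot be the input that distinguishes the totally nondegenerate case; what condition (2) actually supplies is the \emph{absence of relations} down to degree $-K$, i.e.\ linear independence of iterated brackets, which is the opposite of surjectivity. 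That some genuinely new input is needed is shown by the examples of Section \ref{Sec51} and the Appendix: there exist model surfaces of length $l\geq 3$ with $\fg_+\neq 0$ (e.g.\ $\fg=\mathfrak{sp}(4,\C)$), and your sketch never isolates what fails for them. Your treatment of the quotient by a subspace of $\fg_{-l}$ via a ``lifting to a pseudo-derivation'' is likewise only a named hope; a derivation of the quotient need not lift, and you do not show that the proposed lift can be controlled.

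The paper's proof runs along a completely different track, which is worth internalizing. First it invokes the graded Levi decomposition $\fg=\mathfrak{s}\oplus\fr$ of Medori--Nacinovich and uses the classification of pseudo-complex fundamental algebras with semisimple Tanaka prolongation (Table \ref{T1}) to show that, under condition (2) with $K\geq 3$, one would need $\dim\mathfrak{s}_{-3}=\dim(\mathfrak{s}_{-1})^3+\dim(\mathfrak{s}_{-1})^2$, which is impossible unless $\mathfrak{s}_{-1}=0$; hence $\mathfrak{s}\subset\fg_0$, $\fg_+=\fr_+$, and the subalgebra generated by $\fg_{-1}\oplus\fg_1$ is nilpotent. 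It then derives a contradiction combinatorially: choosing $Y\in\fg_1$ suitably high in the lower central series, $a=X-iI(X)$ with $\ad(X-iI(X))^2Y\neq0$ (after a careful case analysis), $Z=[Y,a]$ and $a_j=\ad(Z)^ja$, it expands $0=\ad(Y)^{s-1}(a^{l}b)$ (or $\ad(Y)^{s-2}$, depending on the parity of $l$) as a nonnegative combination of words $a_{j_1}\cdots a_{j_{p-1}}b_{j_p}$ of length at most $K$; the hypothesis $l+2\leq 2K$ is exactly what guarantees these words live in degrees where condition (2) and the Lyndon-word basis of Proposition \ref{Prop22} make them linearly independent, while the coefficient of $a_1^{s-1}b$ is strictly positive whenever $\fg_1\neq0$. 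None of these ingredients --- the Levi decomposition, the classification in Table \ref{T1}, or the word combinatorics --- appears in your outline, so the proposal as it stands does not constitute a proof.
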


Indeed, each totally nondegenerate model surface of codimension $k$ with length $l>3$ satisfies the first condition of Theorem \ref{Thm2}. This finished the proof of the Main Theorem. As a part of the proof of Theorem \ref{Thm2} in the Section \ref{Sec4}, we need the classification of the standard real submanifolds with simple complex Lie algebra of infinitesimal CR automorphisms. In Section \ref{Sec5}, we recall the details on this classification using the results from \cite{A}. This classification is related to certain $|l|$--gradings of simple Lie algebras. Therefore this classification and results in Section \ref{Sec24} provide the following answer to the question of Beloshapka we recalled in the Introduction:

\begin{thm*}\label{Thm3}
For each $l\geq 2$, there is a model surface (not totally nondegenerate) that admits infinitesimal CR automorphisms of positive weighted degree. In particular, there are model surfaces that are (flat models of) parabolic geometries and carry infinitesimal CR automorphism of weighted degree $l$.
\end{thm*}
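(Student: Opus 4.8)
The plan is to \emph{construct} the required model surfaces from $|l|$--gradings of simple complex Lie algebras, using the classification of standard real submanifolds with simple symmetry algebra recalled in Section~\ref{Sec5}. By Theorem~\ref{Thm1} every standard real submanifold $\phi(\fm)\subset\C^{N}$ attached to a non--degenerate pseudo--complex fundamental graded Lie algebra $(\fm,I)$ of $l$--th type is a model surface, and by Section~\ref{Sec24} the infinitesimal CR automorphisms of $\phi(\fm)$ of weighted degree $i>0$ are exactly the elements of $\fg_i$, where $\fg=\mathrm{prol}(\fm,I)$ is the Tanaka prolongation. Hence it suffices to exhibit, for each $l\ge 2$, one pair $(\fm,I)$ of $l$--th type with $\fg_l\neq 0$; the corresponding $\phi(\fm)$ is then a model surface carrying infinitesimal CR automorphisms of weighted degree $l$, and for $l\ge 3$ it cannot be totally nondegenerate, since by the Main Theorem total nondegeneracy of a model of length $l\ge 3$ would force $\fg_+=0$ (for $l=2$ the construction yields a non--degenerate quadric, whose nonlinear stability group is classical).

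For the construction I would take a simple complex Lie algebra $\mathfrak s$ carrying an $|l|$--grading $\mathfrak s=\mathfrak s_{-l}\oplus\cdots\oplus\mathfrak s_{l}$ that is of \emph{CR type} in the sense of Section~\ref{Sec5}: one wants the $(-1)$--component to split, compatibly with a conjugation, as $\mathfrak s_{-1}=V\oplus\bar V$ with $[V,V]=[\bar V,\bar V]=0$ (the bracket landing in $\mathfrak s_{-2}$), this being precisely the complexified form of the identity $[IX,IY]=[X,Y]$. The negative part $\mathfrak s_-=\mathfrak s_{-l}\oplus\cdots\oplus\mathfrak s_{-1}$ is then a fundamental graded Lie algebra, being generated by $\mathfrak s_{-1}$ as for any parabolic grading; the conjugation singles out a real form $\fm\subset(\mathfrak s_-)_{\R}$ on which the decomposition $V\oplus\bar V$ induces a complex structure $I$; and a short check using the simplicity of $\mathfrak s$ shows that $(\fm,I)$ is a non--degenerate pseudo--complex fundamental graded Lie algebra of $l$--th type. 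For $l=2$ this recovers the nondegenerate quadrics ($\mathfrak s=\mathfrak{sl}(n+2,\C)$ in its contact grading), and for general $l$ one takes the corresponding member of the list in Section~\ref{Sec5}.

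It remains to identify $\fg=\mathrm{prol}(\fm,I)$: after complexification the pseudo--complex Tanaka prolongation of $(\fm,I)$ is governed by the ordinary Tanaka prolongation of $\mathfrak s_-$, which for the gradings in question equals $\mathfrak s$ (the standard prolongation statement for parabolic gradings), so $\fg$ is a real form of $\mathfrak s$ with $\fg_l\cong\mathfrak s_l\neq 0$ as the grading has length $l$. Since $\fg$ is then simple and $|l|$--graded, $\fp=\fg_0\oplus\fg_+$ is a parabolic subalgebra and $\phi(\fm)$ realizes the flat model of the associated parabolic geometry, the weighted--degree--$l$ automorphisms being furnished by $\fg_l$; this gives the ``in particular'' clause. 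The main obstacle I expect lies here: one has to guarantee that for \emph{every} $l\ge 2$ the list in Section~\ref{Sec5} contains a simple $|l|$--graded Lie algebra admitting the conjugation--adapted splitting $\mathfrak s_{-1}=V\oplus\bar V$, and to verify that the pseudo--complex prolongation is neither larger nor smaller than the real form of $\mathfrak s$ it picks out, so that $\fg_l$ is genuinely nonzero. Both are settled by quoting the classification of Section~\ref{Sec5} (based on \cite{A}) together with the prolongation facts already in use; what is then left is the routine, if somewhat lengthy, identification of each $\phi(\fm)$ with an explicit weighted homogeneous polynomial model.
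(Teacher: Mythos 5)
Your proposal is correct and follows essentially the same route as the paper: the author also takes the entries of Table \ref{T1} (the classification from \cite{A} of pseudo--complex fundamental graded Lie algebras with simple complex Tanaka prolongation), notes that $\dim(\fg_l)=\dim(\fg_{-l})\neq 0$ by the symmetry of gradings of simple Lie algebras, and concludes via Corollary \ref{cor2.5} and the fact that every standard real submanifold is a model surface. The only difference is one of emphasis: you spell out the prolongation identification and the availability of an entry for every $l\geq 2$, which the paper leaves implicit in its appeal to the classification.
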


In Section \ref{Sec51}, we construct an explicit example of a weighted homogeneous model of length $l=3$ and large nonlinear stability group (with infinitesimal CR automorphisms of weighted degree up to $3$). In Appendix \ref{Sec6}, we provide several more interesting examples of  weighted homogeneous models that have large nonlinear stability group.

\section{Levi--Tanaka algebras and standard real submanifolds}\label{Sec2}

\subsection{CR manifolds and Levi--Tanaka algebras}\label{Sec21}

A CR manifold is a triple $(M,\mathcal{D},I)$, where $M$ is a smooth manifold with a smooth maximally non--integrable distribution $\mathcal{D}$ carrying a complex structure $I$ that satisfies a (formal) integrability condition. If we decompose the complexification $\mathcal{D}\otimes \mathbb{C}$ of $\mathcal{D}$ to $\pm i$--eigenspaces $\mathcal{D}^{10}\oplus \mathcal{D}^{01}$ of the complex structure $I$, then the (formal) integrability condition can be expressed as $[\mathcal{D}^{10},\mathcal{D}^{10}]\subset \mathcal{D}^{10}$. For a CR submanifold $M$ satisfying the Hormander-Kohn bracket condition is $\mathcal{D}$ just the complex tangent space $T^{\mathbb{C}}M$. On the other hand, not every CR manifold can be embedded in some $\mathbb{C}^N$.

Consider a CR manifold $(M,\mathcal{D},I)$ and let $p\in M$ be a regular point. Then the bracket of vector fields on $\mathcal{D}$ induces a tensorial (Levi) bracket in $\wedge^2\mathcal{D}_p^*\otimes T_pM/\mathcal{D}_p$. The CR manifold $(M,\mathcal{D},I)$ is called Levi non--degenerate at $p$ if this (Levi) bracket has trivial kernel. Thus at regular point $p$, the image of the Levi bracket coincides with the space $\fg_{-2}:=T^{-2}_pM/\mathcal{D}_p$ generated by Lie brackets of two local sections of $\mathcal{D}$. Iterating this procedure, we obtain tensorial brackets in $\mathcal{D}_p^*\otimes (T^{-i}_pM)^*\otimes T_pM/T^{-i}_pM$ with image $\fg_{-i+1}:=T^{-i-1}_pM/T^{-i}_pM$ at regular point $p$, where $T^{-i}M$ is the space generated by Lie brackets of $i$ local sections of $\mathcal{D}$. The following is then easily obtained from the properties of bracket of vector fields, Jacobi identity and maximal non--integrability of $\mathcal{D}$.

\begin{lem*}
The above tensorial brackets define a Lie bracket $[,]$ on the vector space $\fm:=\fg_{-l}\oplus \dots \oplus \fg_{-1}$ with the following properties:
\begin{enumerate}
\item $\fm$ is generated by $\fg_{-1},$
\item $[\fg_{a},\fg_{b}]=\fg_{a+b}$ ($\fg_l=0$ for $l<-l$)
\item the complex structure $I$ on $\fg_{-1}$ is satisfying $$[I(X),I(Y)]=[X,Y]$$ for all $X,Y\in \fg_{-1}.$
\end{enumerate}
Therefore the tuple $(\fm,I)$ is a (non--degenerate) pseudo--complex fundamental graded Lie algebra of $l$--th type.
\end{lem*}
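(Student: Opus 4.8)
The plan is to realise $(\fm,[\,,\,])$ as the symbol (associated graded) of the filtered Lie algebra of germs of vector fields near $p$, and then to read off (1)--(3) from the corresponding properties of the bracket of vector fields. Since $p$ is regular, on a neighbourhood of $p$ each $T^{-i}M$ has constant rank, so
\[\mathcal{D}=T^{-1}M\subseteq T^{-2}M\subseteq\dots\subseteq T^{-l}M=TM\]
is a filtration of $TM$ by subbundles (the fact that it exhausts $TM$ after $l$ steps is maximal non--integrability of $\mathcal{D}$), where by construction the sheaf of sections of $T^{-i-1}M$ equals $\Gamma(T^{-i}M)+[\Gamma(\mathcal{D}),\Gamma(T^{-i}M)]$, that is, it is generated over $C^\infty$ by iterated brackets of at most $i+1$ local sections of $\mathcal{D}$; and $\fg_{-1}=\mathcal{D}_p$, $\fg_{-i}=T^{-i}_pM/T^{-i+1}_pM$.

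First I would show that the bracket of vector fields is filtration--compatible, $[\Gamma(T^{-i}M),\Gamma(T^{-j}M)]\subseteq\Gamma(T^{-i-j}M)$, by induction on $i+j$: the case $i=1$ is the definition of the filtration, and for $i\ge2$ one expands $[[\xi,\eta],\zeta]=[\xi,[\eta,\zeta]]-[\eta,[\xi,\zeta]]$ (with $\xi\in\Gamma(\mathcal{D})$, $\eta\in\Gamma(T^{-i+1}M)$, $\zeta\in\Gamma(T^{-j}M)$) and applies the inductive hypothesis to both summands. This already yields the inclusion $[\fg_a,\fg_b]\subseteq\fg_{a+b}$ in (2), with the convention $\fg_{-m}=0$ for $m>l$. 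The step I expect to be the real heart of the matter is next: showing that on the associated graded the bracket becomes tensorial, i.e.\ that for $\xi\in\Gamma(T^{-i}M)$ and $\eta\in\Gamma(T^{-j}M)$ the class of $[\xi,\eta]_p$ in $\fg_{-i-j}=T^{-i-j}_pM/T^{-i-j+1}_pM$ depends only on $\xi_p\bmod T^{-i+1}_pM$ and $\eta_p\bmod T^{-j+1}_pM$. Working with a local frame of $T^{-i}M$ adapted to the filtration, the Leibniz rule $[f\xi,\eta]=f[\xi,\eta]-(\eta f)\xi$ shows $C^\infty$--linearity modulo $T^{-i-j+1}_pM$ because the error $(\eta f)\xi_p$ already lies in $T^{-i}_pM\subseteq T^{-i-j+1}_pM$ (here $j\ge1$); and any frame vector lying in $T^{-i+1}M$ brackets $\eta$ into $\Gamma(T^{-i-j+1}M)$ by the previous step, so contributes $0$. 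Hence one obtains well--defined bilinear maps $[\,,\,]\colon\fg_{-i}\times\fg_{-j}\to\fg_{-i-j}$.

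Granting the tensoriality, the rest is formal. Skew--symmetry is inherited from $[\xi,\eta]=-[\eta,\xi]$. For the graded Jacobi identity, lift $X\in\fg_{-i}$, $Y\in\fg_{-j}$, $Z\in\fg_{-k}$ to sections $\xi,\eta,\zeta$ and apply tensoriality first to $\eta,\zeta$ and then to $\xi$ and the section $[\eta,\zeta]$; this identifies the class of $[\xi,[\eta,\zeta]]_p$ in $\fg_{-i-j-k}$ with $[X,[Y,Z]]$, so the graded Jacobi identity is the image of the one for vector fields. For surjectivity, the left--normed reduction already used gives $T^{-i-1}_pM=T^{-i}_pM+[\Gamma(\mathcal{D}),\Gamma(T^{-i}M)]_p$, whence $[\fg_{-1},\fg_{-i}]=\fg_{-i-1}$ for all $i\ge1$; this is the surjective half of (2), and induction on $i$ then gives (1), that $\fg_{-1}$ generates $\fm$.

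Finally, restrict the CR complex structure to $\fg_{-1}=\mathcal{D}_p$. For real $X,Y\in\Gamma(\mathcal{D})$ set $Z=X-iIX$, $W=Y-iIY\in\Gamma(\mathcal{D}^{10})$; then
\[[Z,W]=\bigl([X,Y]-[IX,IY]\bigr)-i\bigl([IX,Y]+[X,IY]\bigr),\]
and formal integrability gives $[Z,W]\in\Gamma(\mathcal{D}^{10})\subseteq\Gamma(\mathcal{D}\otimes\mathbb{C})$, so $[Z,W]_p$ vanishes modulo $\mathcal{D}_p\otimes\mathbb{C}$. Since $[\Gamma(\mathcal{D}),\Gamma(\mathcal{D})]\subseteq\Gamma(T^{-2}M)$, taking classes in $\fg_{-2}\otimes\mathbb{C}=\fg_{-2}\oplus i\,\fg_{-2}$ and separating real and imaginary parts yields $[X,Y]=[IX,IY]$ in $\fg_{-2}$ (together with the auxiliary identity $[IX,Y]=-[X,IY]$), which is (3). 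As $l$ is chosen maximal with $\fg_{-l}\neq0$, and the bracket $\fg_{-1}\otimes\fg_{-1}\to\fg_{-2}$ is precisely the tensorial Levi bracket of the statement --- nondegenerate exactly when $M$ is Levi--nondegenerate at $p$ --- this exhibits $(\fm,I)$ as a (nondegenerate) pseudo--complex fundamental graded Lie algebra of $l$--th type. The one genuinely delicate point throughout is the tensoriality step: one must check that every error term coming from the Leibniz rule, or from changing a representative, lands in the $T^{-i-j+1}_pM$ level; once that is in place, the remaining identities are bookkeeping with the Jacobi identity and the integrability condition.
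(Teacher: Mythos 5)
Your proof is correct and is precisely the standard argument the paper alludes to when it says the lemma is ``easily obtained from the properties of bracket of vector fields, Jacobi identity and maximal non--integrability of $\mathcal{D}$'': filtration--compatibility of the bracket, tensoriality on the associated graded via the Leibniz rule, descent of skew--symmetry and Jacobi, surjectivity of $[\fg_{-1},\fg_{-i}]\to\fg_{-i-1}$ from the definition of $T^{-i-1}M$, and property (3) from $[\mathcal{D}^{10},\mathcal{D}^{10}]\subset\mathcal{D}^{10}$. The paper omits all details, so your write--up simply supplies them along the intended route.
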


The tuple $(\fm,I)$ defined in the above lemma is called Levi--Tanaka algebra of $(M,\mathcal{D},I)$ at $p$.

\subsection{Universal Levi--Tanaka algebras}\label{Sec22}

In this article, we will use a different characterization of the universal Levi--Tanaka algebras of $l$--th type than in \cite{T}. Suppose $(\fm,I)$ is a Levi--Tanaka algebra. Let us recall that if we complexify $\fm$, then we can decompose $\fg_{-1}\otimes \mathbb{C}$ to $\pm i$--eigenspaces $\fg_{-1}^{10},\fg_{-1}^{01}$ of $I$ and the condition $[I(X),I(Y)]=[X,Y]$ implies that these are abelian Lie subalgebras of $\fm$. To characterize the complexification of universal Levi--Tanaka algebras of $l$--th type, we firstly consider a free complex Lie algebra generated $\fg_{-1}\otimes \mathbb{C}$ and quotient it by an ideal determined by $l$ and the fact that $\fg_{-1}^{10},\fg_{-1}^{01}$ should be abelian Lie subalgebras.

Let us start by recalling the usual construction of the free complex Lie algebras generated by $\fg_{-1}\otimes \mathbb{C}$ using the Lyndon words. In the construction, we can fix arbitrary basis of $\fg_{-1}\otimes \mathbb{C}$. Therefore, we will assume that $z_1,\dots, z_n$ and $\bar{z}_1,\dots,\bar{z}_n$ are conjugated bases of $\fg_{-1}^{10}$ and $\fg_{-1}^{01}$, respectively and consider an ordered set $\mathcal{A}=\{z_1<\dots<z_n<\bar{z}_1<\dots<\bar{z}_n\}$. 

The set of Lyndon words over $\mathcal{A}$ is the set of all words from the alphabet $\mathcal{A}$ that are strictly smaller (w.r.t. to the lexicographic ordering) than any other cyclic permutation of them selves. For example, $z_1z_1\bar{z}_1z_1\bar{z}_1$ is a Lyndon word, but $z_1\bar{z}_1z_1\bar{z}_1$ is not a Lyndon word (cyclic permutation by two to either side gives the same word). It is well--known that the Lyndon words form a basis of the free Lie algebra $L(\mathcal{A})$ over the set $\mathcal{A}$. More precisely, the map $B$ that makes from a Lyndon word an element $L(\mathcal{A})$ is defined iteratively as follows:
\begin{itemize}
\item if $w$ is a letter in $\mathcal{A}$, then $B(w)=w$, and
\item if $w=uv$ for Lyndon words $u,v$ and this decomposition has the longest $v$ among all possible decompositions of $w$ to Lyndon words, then $B(w)=[B(u),B(v)]$.
\end{itemize}

If one replaces in the definition of the map $B$ the bracket in $L(\mathcal{A})$ by the bracket in the complexification of the non--degenerate pseudo--complex fundamental graded Lie algebra $(\fm,I)$, then one obtains a surjective Lie algebra homomorphism $L(\mathcal{A})\to \fm\otimes \mathbb{C}$. In \cite{T}, it is shown that there is universal Levi--Tanaka algebra of $l$--th type with complexification $L_{CR,l}(\mathcal{A})$ such that the map $L(\mathcal{A})\to \fm\otimes \mathbb{C}$ factors via the natural surjective Lie algebra homomorphisms $L(\mathcal{A})\to L_{CR,l}(\mathcal{A})$, i.e., it is given by a unique surjective Lie algebra homomorphism $L_{CR,l}(\mathcal{A})\to \fm\otimes \mathbb{C}$.

Clearly, the conditions that describe $L_{CR,l}(\mathcal{A})$ as a quotient of the complex free Lie algebra $L(\mathcal{A})$ are the length $l$ and the property that $z_i's$ (a thus $\bar{z}_i's$) commute with each other. This allows us to describe the basis of $L_{CR,l}(\mathcal{A})$ explicitly.

\begin{prop*}\label{Prop22}
The set of Lyndon words of length at most $l$ with the properties that:
\begin{enumerate}
\item the word begins with $z_i$ for some $i$ and any consecutive sequence of $z_i's$ is non--decreasing, and
\item the word ends with $\bar{z}_j$ for some $j$ and any consecutive sequence of $\bar{z}_i's$ is non--increasing,
\end{enumerate}
together with the set $\mathcal{A}$ form a basis of the Lie algebra $L_{CR,l}(\mathcal{A})$, i.e., of the complexification of the universal Levi--Tanaka algebra.
\end{prop*}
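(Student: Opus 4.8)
The plan is to realise $L_{CR,l}(\mathcal{A})$ as a quotient of the free Lie algebra and carry out a straightening (Gröbner–Shirshov) argument in its Lyndon basis. Write $L_{CR,l}(\mathcal{A})=L(\mathcal{A})/J$, where $J$ is the Lie ideal generated by the abelian relations $[z_i,z_j]$, $[\bar z_i,\bar z_j]$ ($1\le i,j\le n$) together with all brackets of degree $>l$ (the $(l{+}1)$-st term of the lower central series of $L(\mathcal{A})$). Both sets of generators are homogeneous, so $J$ is homogeneous and in degrees $\le l$ it coincides with the homogeneous ideal $J_{ab}$ generated by the abelian relations alone. Hence a graded basis of $L_{CR,l}(\mathcal{A})$ is obtained from a graded basis of $L_{CR}(\mathcal{A}):=L(\mathcal{A})/J_{ab}$ by keeping the basis vectors of degree $\le l$, and it suffices to prove the Proposition with the length restriction removed, i.e. to show that the set $\mathcal{B}$ of Lyndon words over $\mathcal{A}$ satisfying (1)--(2), together with $\mathcal{A}$ itself, maps to a basis of $L_{CR}(\mathcal{A})$. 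Note that $L_{CR}(\mathcal{A})$ is exactly the free product $\mathbb{C}^n\ast\mathbb{C}^n$ of two $n$-dimensional abelian Lie algebras, the first spanned by the $z_i$ and the second by the $\bar z_i$.

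For the spanning part I would straighten in the Lyndon basis: since $\{B(w):w\ \text{Lyndon}\}$ spans $L(\mathcal{A})$, it is enough to rewrite each $B(w)$ modulo $J_{ab}$ as a $\mathbb{Z}$-linear combination of $B(v)$, $v\in\mathcal{B}$, of the same degree. There are three elementary reductions, all obtained from the Jacobi identity together with $[z_i,z_j]=0$ and $[\bar z_i,\bar z_j]=0$: if $w$ has length $\ge2$ and uses only $z$-letters, or only $\bar z$-letters, then $B(w)\equiv0$; if $w$ contains a descent within a maximal block of $z$-letters or an ascent within a maximal block of $\bar z$-letters, then $B(w)$ is a combination of $B(v)$ with fewer such \emph{defects}; and if $w$ uses both kinds of letters (hence, being Lyndon, begins with a $z$-letter) but ends with a $z$-letter, then the trailing block of $z$-letters can be absorbed inwards at the cost of words with a strictly shorter trailing $z$-block. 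A Lyndon word to which none of these applies lies in $\mathcal{B}$; ordering Lyndon words by the lexicographic triple (number of defects; length of the trailing $z$-block; the word itself in a fixed order) makes each reduction strictly decreasing, so iterating terminates and expresses every $B(w)$ in terms of $\{B(v):v\in\mathcal{B}\}$.

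For linear independence it suffices, in each degree $d$, to show $\dim_{\mathbb{C}}(L_{CR}(\mathcal{A}))_d=|\mathcal{B}_d|$, the inequality $\le$ being the spanning just established. I would deduce this by passing to universal enveloping algebras: $U$ preserves coproducts, so $U(L_{CR}(\mathcal{A}))=\mathbb{C}[z_1,\dots,z_n]\sqcup\mathbb{C}[\bar z_1,\dots,\bar z_n]$, the coproduct of two polynomial algebras, whose Hilbert series is given by the free–product formula $(H_{A\sqcup B})^{-1}=H_A^{-1}+H_B^{-1}-1$ with $H_A=H_B=(1-q)^{-n}$. By the PBW theorem $H_{U(L_{CR}(\mathcal{A}))}(q)=\prod_{d\ge1}(1-q^d)^{-\dim_{\mathbb{C}}(L_{CR}(\mathcal{A}))_d}$; comparing this with the generating function $\prod_{d\ge1}(1-q^d)^{-|\mathcal{B}_d|}$ coming from a direct count of the constrained Lyndon words forces the two exponent sequences to agree. (Alternatively one quotes the known Lyndon-type basis of the free partially commutative Lie algebra attached to the commutation graph $K_n\sqcup K_n$, or exhibits a distinct nonzero leading associative word for the image of each $B(w)$, $w\in\mathcal{B}$, in $\mathbb{C}[z]\sqcup\mathbb{C}[\bar z]$.) Combined with the spanning, this shows $\mathcal{B}$ is a basis of $L_{CR}(\mathcal{A})$, and restricting to words of length $\le l$ gives the Proposition.

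The hard part is the straightening in the spanning step: pinning down the exact Jacobi identities behind the second and third reductions and, above all, verifying that the rewriting is \emph{confluent} --- in Gröbner–Shirshov language, that every composition of the abelian relations reduces to zero modulo the others --- and \emph{terminating} with the stated monovariant. This is the combinatorial heart of the Proposition and is also what singles out the precise shape of conditions (1)--(2) (non-decreasing runs of $z$'s, non-increasing runs of $\bar z$'s); once it is in place, identifying the irreducible words with $\mathcal{B}$ and matching the two generating functions are routine.
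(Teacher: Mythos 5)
Your strategy is sound and, on the linear-independence half, genuinely different from the paper's. The paper proves spanning by essentially the same Jacobi-based straightening you outline (a case analysis reducing any Lyndon word violating (1) or (2) modulo the commutation relations), but it treats independence directly: it asserts that the Jacobi identity cannot create relations among the conforming words because it ``breaks the ordering'', and then argues by induction on the standard factorization $w=uv$ that each $B(w)$ is a nonzero element of $L_{CR,l}(\mathcal{A})$. You instead identify $L_{CR,l}(\mathcal{A})$ with the degree-$\le l$ truncation of the free product $\mathbb{C}^n\ast\mathbb{C}^n$ and count dimensions via PBW against the free-product Hilbert series $\bigl(2(1-q)^n-1\bigr)^{-1}$; this is more robust than the paper's ordering claim, which is the weakest point of the published argument. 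What your route costs is one combinatorial identity you do not prove, namely $\prod_{d\ge1}(1-q^d)^{-|\mathcal{B}_d|}=\bigl(2(1-q)^n-1\bigr)^{-1}$: without it the comparison of exponent sequences does not close, and this identity is exactly where the precise shape of conditions (1)--(2) enters (it amounts to $\mathcal{B}$ being a transversal of the Lyndon traces for the commutation graph $K_n\sqcup K_n$, available from Duchamp--Krob/Lalonde as you indicate, so this is a citation to supply rather than a flaw). On the spanning side your argument is a plan rather than a proof --- the termination monovariant is plausible but the confluence check is deferred --- yet here you are no less complete than the paper, whose own reduction is equally terse about which Jacobi identities are applied and why the rewriting terminates.
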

\begin{proof}
Firstly, we shot that if a Lyndon word does not satisfy the condition (1) or (2), then it is in the kernel of the projection $L(\mathcal{A})\to L_{CR,l}(\mathcal{A})$ up to a linear combination of words that satisfy the condition (1) or (2). 

If $w$ does not start with $z_i$ for some $i$ and has length at least two, than it contains only $\bar{z}_i's$ and clearly is in the kernel.

If $w$ contains sequence of  $z_i's$ is decreasing, then along the application of $b$ on $w$ one encounters a Lyndon word $u$ of length at least two ending with the (first) largest of this $z_i's$ that contains unique $z_j<z_i$. If $u=z_jz_i$, then $w$ is clearly in the kernel, otherwise, there is word $v$ such that $u=u_jvz_i$. Then using Jacobi identity, we get that $u=z_jz_iv$ modulo other basis vectors.

If $w$ contains increasing sequence of $\bar{z}_i's$, then along the application of $b$ on $w$ one encounters a Lyndon word $u$ of length at least $2$ containing only $\bar{z}_i's$ and clearly $w$ is in the kernel.
 
 If $w$ does not end with $\bar{z}_i$ and has length at least two, then along the application of $b$ on $w$ one encounters a Lyndon word $u$ of length at least two ending with $z_i$ that contains unique $z_j<z_i$. If $u=z_jz_i$, then $w$ is clearly in the kernel, otherwise, there is word $v$ such that $u=u_jvz_i$. Then using Jacobi identity, we get that $u=z_jz_iv$ modulo other basis vectors.

Thus it remains to show that the Lyndon words $w$ satisfying the properties (1) and (2) are linearly independent in $L_{CR,l}(\mathcal{A})$. It is clear that we can not use the Jacobi identity to express the words $w$ as linear combination of words satisfying the properties (1) and (2), because the Jacobi identity breaks the ordering. Thus it suffices to show that $w$'s are non--trivial elements of $L_{CR,l}(\mathcal{A})$. Suppose $w=uv$ for Lyndon words $u,v$ and this decomposition has the longest $v$ among all possible decompositions of $w$ to Lyndon words. Then from definition of $L_{CR,l}(\mathcal{A})$ follows that either:
\begin{itemize}
\item $u=z_i$, $v=\bar{z}_j$ and $[z_i,\bar{z}_j]$ is a non--trivial element of $L_{CR,l}(\mathcal{A})$, or
\item $u=z_i$ and $v$ satisfies the conditions (1) and (2) ($v$ not starting with $z_i$ for some $i$ is contradiction with $v$ being Lyndon word satisfying (2)) and $[z_i,B(v)]$ is a non--trivial element of $L_{CR,l}(\mathcal{A})$ if and only if $B(v)$ is a non--trivial element, or
\item  $v=\bar{z}_j$ and $u$ satisfies the conditions (1) and (2) ($u$ not ending with $\bar{z}_j$ for some $j$ is contradiction with $u$ being Lyndon word satisfying (1)) and $[B(u),\bar{z}_j]$ is a non--trivial element of $L_{CR,l}(\mathcal{A})$ if and only if $B(u)$ is a non--trivial element, or
\item $u$ and $v$ satisfy the conditions (1) and (2) and $[B(u),B(v)]$ is a non--trivial element of $L_{CR,l}(\mathcal{A})$ if and only if $B(u)$ and $B(v)$ are non--trivial elements.
\end{itemize}
\end{proof}

\subsection{Explicit embedding of the standard real submanifolds}\label{Sec23}

Consider a non--degenerate pseudo--complex fundamental graded Lie algebra $(\fm,I)$. Then it is simple observation that the nilpotent Lie group $\exp(\fm)$ together with the left invariant distribution and the complex structure given by $\fg_{-1}$ and $I$ is a CR manifold. In \cite[Section 1.1]{N}, there is a construction that embeds $\exp(\fm)$ into $\mathbb{C}^{N}$ as a CR submanifold, where $N=dim(\fm)-dim_\mathbb{C}(\fg_{-1})$. If we consider the following subalgebras of $\fm\otimes \mathbb{C}:$
$$\fg_{-1}^{10}:=\{X-iI(X): X\in \fg_{-1}\},$$
$$\fg_{-1}^{01}:=\{X+iI(X): X\in \fg_{-1}\},$$
$$\fn:=(\fg_{-l}\oplus \dots \oplus \fg_{-2})\otimes \mathbb{C}\oplus \fg_{-1}^{10},$$
then the embedding from \cite[Section 1.1]{N} is given as the inclusion $\exp(\fm)\subset \exp(\fn)$. As in \cite[Remark in Section 1.1]{N}, the exponential maps $\exp$ and $\exp^{-1}$ provide a coordinate expression $\phi: \fm\to \fn$ for this embedding. Let us write down the precise formula for $\phi$.

We observe that $\fn$ is an ideal of $\fm\otimes \mathbb{C}$ and that $$\exp(\fm\otimes \mathbb{C})\cong\exp(\fn)\cdot \exp(\fg_{-1}^{01}).$$
This means that for each $X\in \fm$, there is unique $Y\in \fg_{-1}^{01}$ such that $\phi(X)=\exp^{-1}(\exp(X)\exp(Y))$. Moreover, we can use the Baker--Campbell--Hausdorff formula to compute the expression $\exp^{-1}(\exp(X)\exp(Y))$ explicitly, because the Baker--Campbell--Hausdorff formula is finite and globally defined for the nilpotent Lie algebras.

\begin{lem*}
For a non--degenerate pseudo--complex fundamental graded Lie algebra $(\fm,I)$, if we decompose $X=X_{-l}+\dots+X_{-1}\in \fm=\fg_{-l}\oplus \dots \oplus \fg_{-1}$, then the embedding $\phi: \fm\to \fn$ is given by
\begin{align*}
\phi(X)&=\exp^{-1}(\exp(X)\exp(-\frac12(X_{-1}+iI(X_{-1})))))\\
&=\frac12(X_{-1}-iI(X_{-1}))\\
&+X_{-2}+i\frac14[I(X_{-1}),X_{-1}]\\
&+X_{-3}+f(X_{-2}+X_{-1},-\frac12(X_{-1}+iI(X_{-1})))_{-3}\\
&\vdots\\
&+X_{-l}+f(X_{-l+1}+\dots+X_{-1},-\frac12(X_{-1}+iI(X_{-1})))_{-l},
\end{align*}
where $f(Y,Z)_{-i}$ denotes the component of $$f(Y,Z):=\sum_{n=1}^{\infty}\frac{(-1)^{n}}{n+1}\sum_{r_i+s_i>0}\frac{ad(Y)^{r_1}ad(Z)^{s_1}\dots ad(Y)^{r_n}ad(Z)^{s_n}(Y)}{(1+\sum_{i=1}^nr_i)\prod_{i=1}^n r_i!s_i!}$$
 in $\fg_{-i}\otimes \mathbb{C}$.
\end{lem*}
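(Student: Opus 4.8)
\emph{Overall plan.} The plan is to first pin down the element $Y\in\fg_{-1}^{01}$ implicit in the discussion preceding the statement, and then to expand $\exp^{-1}(\exp(X)\exp(Y))$ degree by degree using the Baker--Campbell--Hausdorff formula, which terminates because $\fm\otimes\C$ is nilpotent and concentrated in weighted degrees $-l,\dots,-1$. Throughout I use the eigenspace splitting $\fg_{-1}\otimes\C=\fg_{-1}^{10}\oplus\fg_{-1}^{01}$ and the product decomposition $\exp(\fm\otimes\C)\cong\exp(\fn)\cdot\exp(\fg_{-1}^{01})$ recalled above.

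\emph{Identifying $Y$ and the first equality.} I would set $Y_0:=-\tfrac12(X_{-1}+iI(X_{-1}))$ and observe that $Y_0=W+iI(W)$ for $W=-\tfrac12 X_{-1}\in\fg_{-1}$, so $Y_0\in\fg_{-1}^{01}$. By the Baker--Campbell--Hausdorff formula, $\exp^{-1}(\exp(X)\exp(Y_0))=X+Y_0+f(X,Y_0)$, where $f$ is exactly the sum of iterated--bracket terms in the statement; in particular every term of $f(X,Y_0)$ contains at least one bracket of two leaves of weighted degree $\le -1$, hence lies in weighted degree $\le -2$. Therefore the degree $-1$ component of $\exp^{-1}(\exp(X)\exp(Y_0))$ is $X_{-1}+Y_0=\tfrac12(X_{-1}-iI(X_{-1}))=W'-iI(W')$ with $W'=\tfrac12 X_{-1}$, which lies in $\fg_{-1}^{10}$, while the components in degrees $-2,\dots,-l$ lie trivially in $\fg_{-2}\otimes\C,\dots,\fg_{-l}\otimes\C$. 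Hence $\exp^{-1}(\exp(X)\exp(Y_0))\in\fn$, and by uniqueness of the decomposition $\exp(\fm\otimes\C)\cong\exp(\fn)\cdot\exp(\fg_{-1}^{01})$, the element $Y_0$ coincides with the $Y$ of that discussion; this gives the first displayed equality $\phi(X)=\exp^{-1}(\exp(X)\exp(-\tfrac12(X_{-1}+iI(X_{-1}))))$.

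\emph{Reading off the graded components.} It remains to extract the graded pieces of $X+Y_0+f(X,Y_0)$. The degree $-1$ piece is $\tfrac12(X_{-1}-iI(X_{-1}))$, computed above. For the degree $-2$ piece, the only contribution from $f$ is the single bracket $\tfrac12[X_{-1},Y_0]$ (every other term of $f$ has bracket depth $\ge 2$, hence degree $\le -3$, and $Y_0$ is concentrated in degree $-1$); since $[X_{-1},X_{-1}]=0$, one gets $\tfrac12[X_{-1},-\tfrac12(X_{-1}+iI(X_{-1}))]=i\tfrac14[I(X_{-1}),X_{-1}]$, so the degree $-2$ component is $X_{-2}+i\tfrac14[I(X_{-1}),X_{-1}]$. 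For the degree $-i$ piece with $3\le i\le l$, the key observation is a weight count: each term of $f$ is a multilinear iterated bracket of at least two leaves, each leaf a copy of one of the two arguments and hence of weighted degree $\le -1$; substituting $X=X_{-l}+\cdots+X_{-1}$ and expanding by multilinearity, a term involving a leaf $X_{-j}$ together with any further leaf has degree $\le -(j+1)$, so only the pieces $X_{-1},\dots,X_{-(i-1)}$ of $X$ can occur in $f(X,Y_0)_{-i}$, i.e. $f(X,Y_0)_{-i}=f(X_{-(i-1)}+\cdots+X_{-1},Y_0)_{-i}$. Thus the degree $-i$ component of $\phi(X)$ is $X_{-i}+f(X_{-(i-1)}+\cdots+X_{-1},Y_0)_{-i}$; summing over $i=1,\dots,l$ yields the asserted expansion.

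\emph{On the main difficulty.} I do not expect a genuine obstacle: once $Y_0$ is correctly guessed from the eigenspace splitting of $\fg_{-1}\otimes\C$, everything rests on the single fact that taking a bracket strictly decreases the weighted degree. The two points that demand a little care are fixing the normalization of the Baker--Campbell--Hausdorff expansion (so that the leading bracket term carries coefficient $\tfrac12$, which is what produces the factor $\tfrac14$ in the degree $-2$ term) and the weight count that eliminates the top pieces $X_{-i},\dots,X_{-l}$ from the $f$--part of each degree $-i$ component.
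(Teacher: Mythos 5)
Your proposal is correct and follows essentially the same route as the paper: apply the Baker--Campbell--Hausdorff formula to $\exp(X)\exp(-\tfrac12(X_{-1}+iI(X_{-1})))$ and read off the graded components, using that brackets strictly lower the weighted degree. The only addition is your explicit check that the result lands in $\fn$ (so that $Y_0$ is the unique element of $\fg_{-1}^{01}$ from the decomposition $\exp(\fm\otimes\mathbb{C})\cong\exp(\fn)\cdot\exp(\fg_{-1}^{01})$), a point the paper leaves implicit; this is a welcome clarification, not a different method.
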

\begin{proof}
The Baker--Campbell--Hausdorff formula has form
$\exp^{-1}(\exp(Y)\exp(Z))=Y+Z+f(Y,Z).$ If we decompose the formula according to decomposition $(\fg_{-l}\oplus \dots \oplus \fg_{-2})\otimes \mathbb{C}\oplus \fg_{-1}^{10}$ of $\fn$ and take in the account the properties of the bracket $[X,Y]=ad(X)(Y)$, then we obtain the claimed formula for the embedding $\phi$.
\end{proof}

We can observe that we can easily eliminate the variables from $\fm$ in the formula for $\phi: \fm\to \fn$ by solving linear equations. Moreover, from the properties of the non--degenerate pseudo--complex fundamental graded Lie algebras we can obtain the following corollary. 

\begin{cor*}\label{cor24}
If we assign the weight $j$ to the variables in $\fg_{-j}\otimes \mathbb{C}$, then the defining polynomials obtained from the line $X_{-i}+f(X_{-i+1}+\dots+X_{-1},-\frac12(X_{-1}+iI(X_{-1})))_{-i}$ are weighted homogeneous polynomials of weighted degree $i$. In particular, the standard real submanifold $\phi(\fm)\subset \fn$ corresponding to non--degenerate pseudo--complex fundamental graded Lie algebra $(\fm,I)$ of $l$--th type is a model surface.
\end{cor*}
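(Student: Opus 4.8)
The claim has two halves: the weighted homogeneity of the polynomials read off from $\phi$, and the verification that $\phi(\fm)\subset\fn\cong\C^{n+k}$ satisfies conditions (1)--(4) in the definition of a model surface. The plan is to extract everything directly from the explicit formula for $\phi$ in the preceding Lemma, using only the structural facts about $(\fm,I)$: that $\fm=\fg_{-l}\oplus\dots\oplus\fg_{-1}$ is fundamental graded, i.e.\ $[\fg_{-a},\fg_{-b}]\subseteq\fg_{-a-b}$ and $\fg_{-1}$ generates $\fm$, and that the Levi bracket $\fg_{-1}\otimes\fg_{-1}\to\fg_{-2}$ is non-degenerate.

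\textbf{Step 1 (weighted homogeneity).} First I would fix the grading: assign weight $j$ to the linear coordinates on the summand $\fg_{-j}\otimes\C$ of $\fn$ for $j\geq 2$, and weight $1$ to those on $\fg_{-1}^{10}$. In the Lemma the $\fg_{-i}\otimes\C$-component of $\phi(X)$ is $X_{-i}+f(Y,Z)_{-i}$ with $Y=X_{-i+1}+\dots+X_{-1}$ and $Z=-\tfrac12(X_{-1}+iI(X_{-1}))\in\fg_{-1}^{10}$. Since $\fm$ is nilpotent, $f(Y,Z)$ is a \emph{finite} sum of iterated brackets of $Y$ and $Z$; expanding $Y$ into its graded pieces by multilinearity, every term of $f(Y,Z)_{-i}$ is an iterated bracket of $X_{-1},\dots,X_{-i+1}$ (of weights $1,\dots,i-1$) and of $Z$ (of weight $1$), and because $[\fg_{-a},\fg_{-b}]\subseteq\fg_{-a-b}$, landing in $\fg_{-i}$ forces the weights of the letters of each such term to sum to exactly $i$. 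Hence $X_{-i}+f(Y,Z)_{-i}$, read as a polynomial in the coordinates of weight $j$ on the $\fg_{-j}$-factors, is weighted homogeneous of weighted degree $i$ (for $i=2$ this term is $\tfrac{i}{4}[I(X_{-1}),X_{-1}]$, the Levi form).

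\textbf{Step 2 (bringing $\phi(\fm)$ into the form \eqref{bg}).} Next I would eliminate the $\fm$-coordinates, as announced just before the Corollary. The $\fg_{-1}^{10}$-block of $\phi$ is $z=\tfrac12(X_{-1}-iI(X_{-1}))$, a real-linear isomorphism $\fg_{-1}\to\fg_{-1}^{10}$, so $X_{-1}$ and $I(X_{-1})$ become real-linear functions of $z,\bar z$. Inductively for $j=2,\dots,l$: the $\fg_{-j}\otimes\C$-block reads $w_{(j)}=X_{-j}+f(\cdot)_{-j}$, and substituting the already-obtained expressions for $X_{-1},\dots,X_{-j+1}$ (each weighted homogeneous of its degree in $z,\bar z,\mathrm{Re}(w_{(2)}),\dots,\mathrm{Re}(w_{(j-1)})$) turns $f(\cdot)_{-j}$ into a $\fg_{-j}\otimes\C$-valued polynomial that is weighted homogeneous of degree $j$ in $z,\bar z,\mathrm{Re}(w)$, since substitution of weighted-homogeneous expressions into a weighted-homogeneous polynomial preserves weighted homogeneity. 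As $X_{-j}\in\fg_{-j}$ is real, the real part of this identity gives $X_{-j}=\mathrm{Re}(w_{(j)})-\mathrm{Re}\,f(\cdot)_{-j}$ (again weighted homogeneous of degree $j$, closing the induction), and the imaginary part gives $\mathrm{Im}(w_{(j)})=\mathrm{Im}\,f(\cdot)_{-j}=:P_j(z,\bar z,\mathrm{Re}(w))$ with $P_j$ weighted homogeneous of weighted degree $j$. This realizes $\phi(\fm)$ as a submanifold of $\fn\cong\C^{n+k}$ of the form \eqref{bg}, with $[z_i]=1$ and each $w$-variable coming from $\fg_{-j}$ of weight $m=j\geq2$; in particular it establishes the weighted-homogeneity assertion.

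\textbf{Step 3 (the four conditions) and the main obstacle.} It then remains to check the conditions in the definition of a model surface. Condition (2) is immediate since $\fg_{-1}$ generates $\fm$; condition (3) holds because left translations on $\exp(\fm)$ are CR automorphisms of $(\exp(\fm),\mathcal{D},I)$ acting transitively, hence via $\phi$ they make $\phi(\fm)$ homogeneous under its CR automorphism group; condition (4) holds because the Levi bracket of $\phi(\fm)$ at $0$ is the non-degenerate bracket $\fg_{-1}\otimes\fg_{-1}\to\fg_{-2}$ of $(\fm,I)$. For condition (1), Step 2 exhibits a weight assignment making all $P_j$ weighted homogeneous; the one point that requires a genuine argument---and which I expect to be the main obstacle---is its \emph{uniqueness} among integral weightings with $[z_i]=1$. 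I would obtain it by showing that $P_j$ carries with nonzero coefficient a monomial purely in $z,\bar z$ of ordinary degree $j$ (equivalently, that the degree-$j$, pure-weight-one part of the BCH term $f(\cdot)_{-j}$ is a nonzero element of $\fg_{-j}$), which forces $[w_{(j)}]=j$ under any admissible weighting; this is exactly where one must use that $\fm$ is fundamental of length exactly $l$, so that every $\fg_{-j}$ with $j\leq l$ is nonzero and spanned by $j$-fold brackets of $\fg_{-1}$. Equivalently, it suffices to check that the filtration $T^{-i}_0\phi(\fm)$ has $i$-th graded piece of dimension $\dim_\R\fg_{-i}$, which follows from the fundamental grading of $\fm$ together with the explicit form of $\phi$.
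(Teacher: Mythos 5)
Your proposal is correct and follows the same route the paper takes (the paper leaves the proof implicit, merely remarking that one eliminates the $\fm$-variables by solving linear equations and invokes the grading compatibility $[\fg_{-a},\fg_{-b}]\subseteq\fg_{-a-b}$ of the fundamental graded Lie algebra): your Steps 1--2 are exactly that elimination plus the weight bookkeeping, matching the worked example in Section \ref{Sec51}. Your Step 3 supplies the verification of conditions (1)--(4) that the paper takes for granted, and your fallback argument for uniqueness of the weights via the graded pieces $T^{-i}_0\phi(\fm)/T^{-i+1}_0\phi(\fm)\cong\fg_{-i}$ is the safe version of the claim (the stronger assertion that each $P_j$ contains a pure $z,\bar z$ monomial is not needed and is harder to guarantee componentwise).
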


\subsection{CR automorphisms of standard real submanifolds and the Tanaka prolongation}\label{Sec24}

The infinitesimal CR automorphisms of the standard real submanifold $\phi(\fm)\subset \mathbb{C}^N$ corresponding to the Levi--Tanaka algebra $(\fm,I)$ can be computed by the so called Tanaka prolongation from \cite{T}. Let us recall that the Tanaka prolongation of $(\fm,I)$ is the maximal graded Lie algebra $\fg=\oplus_{-l\leq i}\fg_i$ such that the negative part of grading is $\fm$, zero part $\fg_0$ of the grading commutes with $I$ and the maximal ideal of $\fg$ in the non--negative part of $\fg$ is trivial.

The result of \cite{T} is that
$$\fg_0=\{f\in \frak{der}_0(\fm)| f(I(Y))=I(f(Y)) \rm{\ for\ all\ } Y\in \fg_{-1}\}$$
$$\fg_i=\{f\in \oplus_{j<0}\fg_j^*\otimes \fg_{j+i}:\ f([X,Y])=[f(X),Y]+[X,f(Y)] \rm{\ for\ all\ } X,Y\in \fm\}$$
holds for the Tanaka prolongation $\fg$ of $(\fm,I)$, where $\frak{der}_0(\fm)$ is the space of the grading preserving derivations of $\fm$. Let us emphasize that $f\in \fg_i$ is uniquely determined by the component of $f$ in $\fg_{-1}^*\otimes \fg_{i-1}$, because $\fm$ is generated by $\fg_{-1}$. In \cite{T}, Tanaka proves the following result.

\begin{thm*}
If $(\fm,I)$ is a non--degenerate pseudo--complex fundamental graded Lie algebra, then $\fg_s=0$ for all $s$ large enough and the Tanaka prolongation $\fg$ of $(\fm,I)$ is a finite dimensional Lie algebra.
\end{thm*}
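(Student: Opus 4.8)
The plan is to deduce both assertions — vanishing of $\fg_s$ for $s$ large and finite dimensionality of $\fg$ — from the single structural fact, recalled just before the statement, that an element $f\in\fg_i$ is already determined by its component in $\fg_{-1}^*\otimes\fg_{i-1}$ because $\fm$ is generated by $\fg_{-1}$. First I would record that for every $i\ge 0$ the restriction map $f\mapsto f|_{\fg_{-1}}$ is an injection of $\fg_i$ into the space of linear maps $\fg_{-1}\to\fg_{i-1}$: for $i=0$ this holds because a grading preserving derivation $A\in\frak{der}_0(\fm)$ killing $\fg_{-1}$ kills every iterated bracket and hence all of $\fm$ (so in particular $\fg_0\hookrightarrow\End(\fg_{-1})$ is finite dimensional), and for $i\ge 1$ it is exactly the quoted fact. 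Composing these injections, an induction on $i$ shows that each $\fg_i$ is finite dimensional and, more importantly, that $\fg_i=0$ forces $\fg_{i+1}\hookrightarrow\fg_{-1}^*\otimes\fg_i=0$. Consequently the whole theorem is reduced to producing one index $N$ with $\fg_N=0$.

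For that remaining step I would follow the strategy of \cite{T}. Take $0\ne f\in\fg_k$; by the injectivity just noted one can evaluate on one argument at a time, choosing $X_1$ with $f(X_1)\ne 0$ in $\fg_{k-1}$, then $X_2$ with $f(X_1)(X_2)\ne 0$ in $\fg_{k-2}$, and so on, landing in $\fg_0$ after $k$ evaluations and in $\fg_{-1}$ after one more, so that $f(X_1)(X_2)\cdots(X_{k+1})\ne 0$ for suitable $X_1,\dots,X_{k+1}\in\fg_{-1}$. One then studies the multilinear maps $(X_1,\dots,X_j)\mapsto f(X_1)\cdots(X_j)$: the Jacobi identity shows that their antisymmetrization in two consecutive arguments is expressed through the value of $f$ on the deeper components $\fg_{-2},\dots,\fg_{-l}$ (for the first two slots it is, up to sign, $f([X_1,X_2])$ with $[X_1,X_2]\in\fg_{-2}$), and those values are again governed by $f|_{\fg_{-1}}$. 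Iterating this, every symmetry defect is controlled by the structure constants of $\fm$ together with the \emph{nondegeneracy} of the Levi bracket $\fg_{-1}\otimes\fg_{-1}\to\fg_{-2}$, and it is exactly this nondegeneracy that bounds the length of any such chain of nonzero evaluations. Since the chain built above has length $k+1$, this forces $\fg_k=0$ for all $k$ beyond a bound depending only on $l$ and $\dim\fg_{-1}$, which is the required $N$.

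The hard part is precisely this last step. In the classical situation where $\fm=\fg_{-1}$ is abelian the iterated-evaluation maps are fully symmetric and one is in Cartan's prolongation framework, where finiteness genuinely fails (for instance $\End(\fg_{-1})$ has infinite prolongation); so the depth-$l$ grading and the nondegeneracy of the Levi form must be used in an essential way, and the delicate point is to track how the symmetry defect propagates through the grading of $\fm$ and to convert it into an effective bound. For the purposes of this paper it is enough to invoke this as \cite{T}; the sketch above indicates how a self-contained argument would run.
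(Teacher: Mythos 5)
Note first that the paper does not prove this statement at all: it is quoted verbatim from \cite{T}, so your closing move of invoking \cite{T} for the hard step is, in the end, exactly what the paper does, and your first paragraph (injectivity of $\fg_i\hookrightarrow\fg_{-1}^*\otimes\fg_{i-1}$, hence finite dimensionality of each graded piece and the propagation $\fg_N=0\Rightarrow\fg_{N+1}=0$, reducing everything to finding one vanishing level) is correct and is the standard reduction.

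The genuine gap is in your sketch of that remaining step. You attribute the eventual vanishing to ``the depth-$l$ grading and the nondegeneracy of the Levi bracket,'' but these two ingredients alone do \emph{not} bound the prolongation: the Heisenberg algebra $\fm=\fg_{-2}\oplus\fg_{-1}$ with $\dim\fg_{-2}=1$ and nondegenerate (symplectic) bracket is a nondegenerate fundamental graded Lie algebra of depth $2$ whose full Tanaka prolongation is the infinite-dimensional algebra of formal contact vector fields. What makes the theorem true is the \emph{pseudo-complex} hypothesis, i.e.\ the complex structure $I$ on $\fg_{-1}$ together with the requirement (built into the definition of the prolongation used here) that $\fg_0$ commute with $I$; this forces $\fg_0$ into a conformal-unitary type algebra, whose second classical prolongation vanishes, and it is this vanishing --- not the nondegeneracy of the Levi bracket by itself --- that Tanaka's argument converts into $\fg_s=0$ for large $s$. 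Your chain-of-evaluations scheme never uses $I$ anywhere, so as written it would ``prove'' finiteness in the contact case as well, and therefore cannot be completed. Since you ultimately defer to \cite{T} the statement is still covered, but the sketch should not be presented as an outline of a self-contained proof.
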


Let us now show, how the elements of the Tanaka prolongation $\fg$ of the Levi--Tanaka algebra $(\fm,I)$ are realized as the infinitesimal CR automorphisms of the standard real submanifold $\exp(\fm)\subset \mathbb{C}^N$:

If we identify $T_p\exp(\fm)\cong \fm$ via the left--invariant vector fields, then the value of the infinitesimal automorphism given by $X\in \fg$ at $p=\exp(Y),Y\in \fm$ is $(Ad(\exp(-Y))(X))_{\fm}$, where $(Z)_\fm$ is the component of $Z\in \fg$ in $\fm$. In the coordinates $\exp^{-1}$, we can apply the Baker--Campbell--Hausdorff formula on 
\begin{align*}
&\frac{d}{dt}|_{t=0}\exp(Y)\exp(t(Ad(\exp(-Y))(X))_{\fm})=\\
&\frac{d}{dt}|_{t=0}\exp(tAd(\exp(Y))(Ad(\exp(-Y))(X))_{\fm})\exp(Y)
\end{align*}
 to obtain the following coordinate formulas for infinitesimal automorphisms:

\begin{align*}
&\frac{d}{dt}|_{t=0}g(tX_{-l},0)\\
&\vdots\\
&\frac{d}{dt}|_{t=0}g(tX_{-1},Y_{-1}+\dots+Y_{-l+1}),\\
&\frac{d}{dt}|_{t=0}g(t(1-\Ad(\exp(Y))X_0,Y_{-1}+\dots+Y_{-l+1}),\\
&\vdots\\
&\frac{d}{dt}|_{t=0}g(t(1-\Ad(\exp(Y))(\sum_{l=0}^{j}\frac{1}{l!}ad(-Y)^lX_j),Y_{-1}+\dots+Y_{-l+1}),
\end{align*}
where $g(Z,Y):=Z+\sum_{n=1}^{\infty}\frac{(-1)^{n}}{n+1}\sum_{s_i>0}\frac{ad(Y)^{s_1+\dots+s_{n}}(Z)}{\prod_{i=1}^{n} s_i!}$ is the part of $Z+Y+f(Z,Y)$ linear in $Z$. Note that $ad(Y)^{s_1+\dots+s_{n}}(Z)=0$ for $n$ and $s_i$ large enough.

We can use analogous construction to construct all holomorphic vector fields on $\fn$ preserving the standard real submanifold $\phi(\fm)\subset \mathbb{C}^N=\fn$.

If we identify $T_p\exp(\fn)\cong \fn$ via the left--invariant vector fields, then value of the infinitesimal automorphism given by $X\in \fg$ at $p=\exp(Y),Y\in \fn$ is $(Ad(\exp(-Y))(X))_{\fn}$, where $(Z)_\fn$ is the component of $Z\in \fg$ in $\fm\otimes \mathbb{C}$ projected along $\fg_{-1}^{01}$ into $\fn$. In the coordinates $\exp^{-1}$, we can apply the Baker--Campbell--Hausdorff formula on 
\begin{align*}
&\frac{d}{dt}|_{t=0}\exp(Y)\exp(t(Ad(\exp(-Y))(X))_{\fn})=\\
&\frac{d}{dt}|_{t=0}\exp(tAd(\exp(Y))(Ad(\exp(-Y))(X))_{\fn})\exp(Y)
\end{align*}
 to obtain the following coordinate formulas for infinitesimal automorphisms:

\begin{align*}
&\frac{d}{dt}|_{t=0}g(tX_{-l},0)\\
&\vdots\\
&\frac{d}{dt}|_{t=0}g(tX_{-2},Y_{-1}+\dots+Y_{-l+2}),\\
&\frac{d}{dt}|_{t=0}g(tAd(\exp(Y))(Ad(\exp(-Y))(X_{-1}))_{\fn},Y_{-1}+\dots+Y_{-l+1}),\\
&\vdots\\
&\frac{d}{dt}|_{t=0}g(tAd(\exp(Y))(Ad(\exp(-Y))(X_j))_{\fn},Y_{-1}+\dots+Y_{-l+1}).
\end{align*}

Since expressions $\frac{d}{dt}|_{t=0}(tX_{j})$  for $j<0$ are linear combinations of the coordinate vector fields on $\fn$, we can naturally attach the weight $j$ to them and obtain the following result.

\begin{cor*}\label{cor2.5}
For $X\in \fg_j$, the corresponding infinitesimal CR automorphism is a weighted homogeneous holomorphic vector field on $\mathbb{C}^N=\fn$ of weighted degree $j$.
\end{cor*}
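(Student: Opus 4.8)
The plan is to obtain the statement by a bookkeeping of weighted degrees in the explicit coordinate formula for the infinitesimal automorphism of $X\in\fg_j$ derived above; no analytic input is needed. First I would fix conventions: identify $\C^N=\fn=(\fg_{-l}\oplus\dots\oplus\fg_{-2})\otimes\C\oplus\fg_{-1}^{10}$ by a basis adapted to the direct sum, and assign to a coordinate lying in the $\fg_{-p}$--summand the weight $p$, so that the constant coordinate vector field along it has weighted degree $-p$. The formula to be analysed is
\[
v(Y)=g\!\left(\Ad(\exp(Y))\,\bigl(\Ad(\exp(-Y))(X)\bigr)_{\fn},\ Y_{-1}+\dots+Y_{-l+1}\right),\qquad Y\in\fn.
\]
Since the bracket of the complexified Tanaka prolongation is $\C$--bilinear, $\Ad(\exp(\pm Y))$ is a finite sum ($\fg$ being finite dimensional), and $g(Z,Y')=Z+\sum_{s\ge1}c_s\,\ad(Y')^s(Z)$ truncates likewise, the map $Y\mapsto v(Y)$ is polynomial in the complex coordinates; so $v$ is automatically a holomorphic polynomial vector field on $\C^N$, and writing $v=\sum_{q=1}^{l}v_{-q}\,\partial_{-q}$ with $v_{-q}$ the component in the $\fg_{-q}$--block of $T\fn\cong\fn$, it remains only to show that $v_{-q}$ is weighted homogeneous of degree $j+q$. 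Then each term $v_{-q}\,\partial_{-q}$ has weighted degree $(j+q)+(-q)=j$, which is the assertion.

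The key point will be a ``weight--neutrality'' observation. Call a polynomial function $Y\mapsto F(Y)$ with values in $\fg\otimes\C$ \emph{$d$--homogeneous} if, for every $b$, the $\fg_b$--component of $F$ is a weighted homogeneous polynomial in $Y$ of degree $d-b$. A constant element $A\in\fg_b$ is $b$--homogeneous; and for $Y=\sum_{p\ge1}Y_{-p}\in\fn$ each term of $\Ad(\exp(\pm Y))(A)=\sum_m\tfrac{(\pm1)^m}{m!}\ad(Y)^m(A)$ has the form $(\mathrm{const})\cdot Y_{-p_1}\cdots Y_{-p_m}\otimes B$ with $B\in\fg_{b-(p_1+\dots+p_m)}$ and polynomial coefficient of weighted degree $p_1+\dots+p_m$ --- the drop of the grading index exactly compensating the rise of the polynomial degree. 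Hence $\Ad(\exp(\pm Y))$ carries $d$--homogeneous functions to $d$--homogeneous functions, and so does $Z\mapsto g(Z,Y_{-1}+\dots+Y_{-l+1})$, being assembled from $\ad$ by coordinate blocks in exactly the same way.

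Then I would simply feed $X\in\fg_j$ through the formula: it is $j$--homogeneous; $\Ad(\exp(-Y))(X)$ remains $j$--homogeneous; the projection $(\,\cdot\,)_{\fn}$ discards the components outside $\fg_{-1}\otimes\C,\dots,\fg_{-l}\otimes\C$ and then sends the $\fg_{-1}\otimes\C$--part to its image in $\fg_{-1}^{10}$ along $\fg_{-1}^{01}$, which is a $\C$--linear map supported inside the single weight--$1$ block and so changes no weighted degree, so $\bigl(\Ad(\exp(-Y))(X)\bigr)_{\fn}$ is still $j$--homogeneous; applying $\Ad(\exp(Y))$ (which lands back in $\fn$ because $\fn$ is an ideal of $\fm\otimes\C$ and $Y\in\fn$) and then $g(\,\cdot\,,Y_{-1}+\dots+Y_{-l+1})$ (likewise staying in $\fn$) preserves $j$--homogeneity. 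Hence $v_{-q}$ is weighted homogeneous of degree $j+q$, and by the reduction above $v$ is weighted homogeneous of weighted degree $j$.

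I expect essentially no serious obstacle here: the argument is pure bookkeeping. The only points requiring a little care are to pin down that the weight $p$ put on the $\fg_{-p}$--coordinate block matches exactly the grading shift effected by $\ad$ of that block, so that conjugation and the $g$--operation are genuinely weight--neutral in the sense above, and to note that the sole non--grading--respecting ingredient, the projection $(\,\cdot\,)_{\fn}$, mixes only vectors inside the single weight--$1$ subspace $\fg_{-1}\otimes\C$ and is therefore harmless for the degree count; the polynomiality remarks are routine.
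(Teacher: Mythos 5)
Your proposal is correct and takes essentially the same route as the paper: the paper derives the same coordinate formula for the infinitesimal automorphism via the Baker--Campbell--Hausdorff expansion and then simply remarks that attaching weight $-q$ to the coordinate vector fields along the $\fg_{-q}$--block yields the claim, leaving the degree count implicit. Your ``$d$--homogeneity'' argument is just a carefully written-out version of that same bookkeeping.
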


\subsection{The proof of Theorem \ref{Thm1}}\label{Sec3}

The results in the the previous sections prove that claim (2) implies claim (1) of the Theorem \ref{Thm1}, because the standard real submanifold $\phi(\fm)\subset \mathbb{C}^N$ is a model surface.

To prove that (1) implies (2), is suffices to show that a model surface $M_0$ is isomorphic to the standard real submanifold $\phi(\fm)\subset \mathbb{C}^N$ corresponding to the Levi--Tanaka algebra $(\fm,I)$ of $M_0$ at $0$.

Each model surface $M_0$ has a one parameter group of CR automorphisms acting as $z_i\mapsto z_i\exp(t),\ w_i\mapsto w_i\exp(t\cdot [w_i])$ for $t\in \mathbb{R}$. This implies that the infinitesimal automorphisms decompose according to the weighted degree for $[\frac{\partial}{\partial z_i}]=-1,\ [\frac{\partial}{\partial w_i}]=-[w_i]$ and are polynomial. In particular, it is easy to check using homogeneity w.r.t. CR automorphism group that the complex tangent space $T^{\mathbb{C}}M_0$ is spanned by infinitesimal CR automorphisms of degree $-1$. Let us denote $\mathfrak{inf}(M_0)_i$ for the space of infinitesimal CR automorphisms of $M_0$ of weighted degree $i$, which clearly is a grading of the Lie algebra of infinitesimal CR automorphisms $\mathfrak{inf}(M_0)$ of $M_0$. The definition of the Levi--Tanaka algebra implies that space of infinitesimal CR automorphisms of negative weighted degree is isomorphic as a graded Lie algebra to the Levi--Tanaka algebra of $M_0$ at $0$. In particular, the above one parameter group corresponds to the exponential group of the grading element of $\fm$.

Consequently, the Lie algebra $\mathfrak{inf}(M_0)$ itself is a graded Lie subalgebra of the Tanaka prolongation $\fg$ of the Levi--Tanaka algebra $(\fm,I)$, because the infinitesimal CR automorphisms of $M_0$ of non--negative weighted degree can be identified with an element of $\fg$ just by their action on $\fg_-$. Indeed, the main property of the Tanaka equivalence method from \cite{T} is that this map $\mathfrak{inf}(M_0)\to \fg$ is always injective linear map that induces a Lie algebra homomorphisms for the graded Lie algebras.

Since both the model surface $M_0$ and the standard real submanifold $\phi(\fm)\subset \mathbb{C}^N$ corresponding to the Levi--Tanaka algebra $(\fm,I)$ are homogeneous and real analytic, the inclusion  $\mathfrak{inf}(M_0)\subset \fg$ clearly defines the CR isomorphism of these two and thus the claim (1) implies claim (2) of the Theorem \ref{Thm1}.

The remaining claims are direct consequence of the results in the previous sections. Indeed, there is always surjective homomorphisms from the universal Levi--Tanaka algebra to any Levi--Tanaka algebra of the same type and from Corollary \ref{cor24} follows that  the model surface is totally nondegenerate if and only if the claimed condition is satisfied.

\section{Proof of the Main theorem}\label{Sec4}

In this section, we prove the Theorem \ref{Thm2}, which together with the results of \cite{G} completely resolves the Beloshapka's conjecture and proves the Main theorem and Theorem \ref{Thm3}.

A direct consequence of the Theorem \ref{Thm1} is that the two conditions (1) and (2) of the Theorem \ref{Thm2} are equivalent. So we suppose that $\fg$ is the Tanaka prolongation of a non--degenerate pseudo--complex fundamental graded Lie algebra $(\fm,I)$ of $l$--th type for $l>3$ satisfying for some $K$ such that  $l+2\leq 2K$ that the dimensions of spaces $\fg_{-2}, \dots,\fg_{-K}$ in the Levi--Tanaka algebra of $l$--th type of $M_0$ at $0$ are the same as dimensions of the corresponding grading components of the universal Levi--Tanaka algebra.

We know from \cite[Theorem 3.27]{M} that there is a Levi decomposition $\fg=\frak{s}\oplus \frak{r}$ compatible with the structure of the non--degenerate pseudo--complex fundamental graded Lie algebra $(\fm,I)$ and the grading of the Tanaka prolongation $\fg$. This means that $\frak{s}$ is a graded semisimple Lie subalgebra of $\fg$ such that $(\frak{s}_-,I)$ is non--degenerate pseudo--complex fundamental graded Lie algebra, where the fundamentality follows from \cite[Proposition 3.2]{M}. Let us remark that $\frak{s}_{-1}$ is isomorphic to $\fg_{-1}/\fr_{-1}$ with the induced complex structure $I$ on the quotient. On the other hand, since $\frak{s}_-$ is subalgebra of $\fg_-$ generated by $\frak{s}_{-1}$, the dimensions of $\frak{s}_{-2},\dots,\frak{s}_{-K}$ correspond by our assumptions with the dimension of the corresponding grading component of universal Levi--Tanaka algebra generated by $\frak{s}_{-1}$. Since $K\geq 3$, this means that the dimension of $\frak{s}_{-3}$ has to be $dim(\frak{s}_{-1})^3+dim(\frak{s}_{-1})^2$, which exceeds the possible dimensions of $\frak{s}_{-3}$ known from the classification Levi--Tanaka algebras with semisimple Tanaka prolongation. We recall this classification in Section \ref{Sec5}. Therefore  $dim(\frak{s}_{-1})=0$ and $\frak{s}\subset \fg_0$. Thus $\fm=\frak{r}_-$ and $\fg_+=\fr_+$ and the grading element of $\fg$ is also contained in $\fr$. Therefore, $\frak{g}_{-1}\oplus \frak{g}_1\subset [\frak{r},\frak{r}]$ and the Lie subalgebra of $\fg$ generated by $\frak{g}_{-1}\oplus \frak{g}_1$ is nilpotent.

Therefore if $\fg_0$ is reductive (which is the case when CR--dimension $n=1$ or when $\fg_-$ is the universal Levi--Tanaka algebra), then $[\frak{g}_{-1},\frak{g}_1]=0$ and thus $\fg_1=\fg_+=0$. 

For general $\fg_0$, we consider the following real nilpotent Lie subalgebra $N(a,b,Y)$ of the complexification $\fr\otimes \mathbb{C}$ generated by two conjugated elements $a,b\in \fg^{10},\fg^{01}$ and an element $Y\in \frak{g}_{1}\subset \fr\otimes \mathbb{C}:$ 

We pick arbitrary $Y\in \fg_1$ that is highest in the components of the lower central series for the nilpotent Lie algebra $[\frak{r},\frak{r}]$. 

We pick $a=X-iI(X)$ for the following $X\in \fg_{-1}$. Firstly, we pick arbitrary $X_1\in \frak{g}_{-1}$ such that $ad(X_1-iI(X_1))^{l}Y\neq 0$ for some $l>0$. If $ad(X_1-iI(X_1))^2Y\neq 0$, then we choose $X=X_1$. If $ad(X_1-iI(X_1))^2Y=0$, then there is $X_2\in \frak{g}_{-1}$ such that $[X_2-iI(X_2),[X_1-iI(X_1),Y]]\neq 0$. If $[X_2-iI(X_2),Y]=0$, then we choose $X=X_1+X_2$. If $ad(X_2-iI(X_2))^2Y\neq 0$, then we choose $X=X_2$.  If $ad(X_2-iI(X_2))^2Y= 0$, then $[X_1-iI(X_1)+X_2-iI(X_2),[X_1-iI(X_1)+X_2-iI(X_2),Y]]=2[X_2-iI(X_2),[X_1-iI(X_1),Y]]+[[X_1-iI(X_1),X_2-iI(X_2)],Y]=2[X_2-iI(X_2),[X_1-iI(X_1),Y]]\neq 0$ and we we choose $X=X_1+X_2$.

We denote $Z:=[Y,a]$ and consider the elements 
$$a_l:=ad(Z)^la, b_l:=ad(Z)^lb$$
of $N(a,b,Y)\cap \fg_{-1}\otimes \mathbb{C}$, which (if non--zero) are linearly independent by nilpotency. Therefore we can choose $a_l$'s a part of the basis of $\fg_{-1}^{10}$ in the complexification of universal Levi--Tanaka algebra from Proposition \ref{Prop22}. Let us emphasize that $a_1\neq 0$ by the choice of $a$.

\begin{lem*}\label{tec1}
The words $a_{j_1}\dots a_{j_{p-1}}b_{j_p}$ of the length $1<p\leq K$ satisfying $a_l\leq a_{l+1}$ naturally represent a sum of Lyndon words satisfying the conditions (1) and (2) of Proposition \ref{Prop22}. Moreover,  all these words (if non--zero) are linearly independent as elements of $N(a,b,Y)$.
\end{lem*}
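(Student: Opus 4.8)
The plan is to analyze the structure of the nilpotent Lie algebra $N(a,b,Y)$ inside $\fr\otimes\C$ and to use the combinatorial description of the universal Levi--Tanaka algebra from Proposition \ref{Prop22}. First I would set up the bookkeeping: the elements $a_l=\ad(Z)^l a$ and $b_l=\ad(Z)^l b$ all lie in $\fg_{-1}\otimes\C$, with $a_l\in\fg_{-1}^{10}$ and $b_l\in\fg_{-1}^{01}$ conjugate to each other, and by nilpotency of $N(a,b,Y)$ they are linearly independent as long as they are nonzero (the chain $a,a_1,a_2,\dots$ terminates and consecutive members are independent). Hence we may enlarge the $a_l$'s to a basis of $\fg_{-1}^{10}$ and correspondingly the $b_l$'s to the conjugate basis $\bar z_j$'s, and then apply Proposition \ref{Prop22} literally: the basis of $L_{CR,l}(\mathcal A)$ consists of Lyndon words of length $\le l$ that start with an increasing-until-it-turns block of $z_i$'s and end with a decreasing block of $\bar z_j$'s.

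The second step is the translation of the word $a_{j_1}\cdots a_{j_{p-1}}b_{j_p}$ into the Lyndon basis. Because we only require $a_l\le a_{l+1}$ on the $z$-part and there is a single $\bar z$-letter at the end, each such word already \emph{is} of the form ``non-decreasing run of $z$'s followed by a single $\bar z$'' which is one of the admissible shapes in Proposition \ref{Prop22}; but it need not literally be a Lyndon word if, say, $j_1=\dots=j_{p-1}$ and the cyclic rotation issue arises. I would argue that in the free Lie algebra such a word, bracketed left-normed (or via the map $B$), expands as a $\Z$-linear combination of Lyndon words of the same multidegree, and that every Lyndon word occurring in this expansion again has the shape (1)--(2) of Proposition \ref{Prop22}: the only way to fail (1) or (2) is to produce a decreasing run among the $z$'s or a $\bar z$ that is not at the very end, and neither can happen since the multiset of letters is ``several $z$'s (from among a chain we declared increasing) plus exactly one trailing $\bar z$'' — any Lyndon reordering of that multiset still begins with the smallest $z$ and ends with the unique $\bar z$. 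This gives the first sentence of the Lemma.

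For the linear-independence assertion, the key point is that these words, viewed now in $N(a,b,Y)\subset\fr\otimes\C$, are obtained by the iterated bracket $[a_{j_1},[a_{j_2},[\dots,[a_{j_{p-1}},b_{j_p}]\dots]]]$, i.e. by applying $\ad(Z)^{j_1+\dots+j_p}$-type operators to the fixed ``seed'' bracket $[a,b]=[a,\bar a]$ together with insertions of $Z$. I would show that the grading of $\fr$ (equivalently the $\ad(Z)$-filtration coming from $Z=[Y,a]$, which raises the relevant weight, and nilpotency of the lower central series for $[\fr,\fr]$ in which $Y$ was chosen highest) separates these expressions: distinct multi-indices $(j_1,\dots,j_p)$ land in distinct graded pieces or are otherwise distinguished by how many copies of $Z$ have been absorbed, so no nontrivial cancellation among them is possible. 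Concretely, the map sending the free Lie element to its image in $N(a,b,Y)$ is, restricted to the span of these particular words, injective because on the universal side they are part of an honest basis (by the previous paragraph and Proposition \ref{Prop22}) and on the target side the $\ad(Z)$-weight plus the original $\fg$-grading detects the index vector.

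The main obstacle I anticipate is \emph{not} the combinatorial shape statement but the linear-independence claim in $N(a,b,Y)$: a priori the homomorphism from the universal Levi--Tanaka algebra down to the actual $\fm\otimes\C$ can kill many basis elements, so one must genuinely use the hypothesis that $\dim\fg_{-2},\dots,\dim\fg_{-K}$ agree with the universal dimensions (so that words of length $\le K$ survive) together with the careful choice of $a$ (ensuring $a_1\neq 0$, i.e. $[Z,a]\neq 0$) and of $Y$ (highest in the lower central series of $[\fr,\fr]$) to guarantee that the specific brackets $[a_{j_1},[\dots,b_{j_p}]]$ with $p\le K$ do not collapse. Making precise the ``$\ad(Z)$-weight detects the index vector'' step — i.e. that absorbing $Z$'s cannot accidentally reproduce a lower-index word — is where the real work lies, and it is exactly there that $l+2\le 2K$ (hence $K\ge 3$ and $p\le K < l$, keeping everything inside the range where Proposition \ref{Prop22} is exhaustive) is used.
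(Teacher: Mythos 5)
Your overall strategy is the paper's: realize the words inside the universal Levi--Tanaka algebra via Proposition \ref{Prop22} and transfer linear independence to $\fm\otimes\mathbb{C}$ using the hypothesis that $\dim\fg_{-2},\dots,\dim\fg_{-K}$ are universal. But two steps are off. First, you treat the $b_l$'s as (part of) the conjugate basis $\bar z_j$ of the $a_l$-basis; this fails for $l\geq 1$, since the conjugate of $a_l$ is $\ad(\bar Z)^l\bar a$ whereas $b_l=\ad(Z)^l\bar a$, and $Z=[Y,a]$ is not real. The correct statement --- and the reason the Lemma says the words ``represent a \emph{sum} of Lyndon words'' rather than being single basis words --- is that each $b_{j_p}\in\fg_{-1}^{01}$ is a linear combination of the barred basis elements $\bar a_j$ and the remaining $\bar z$'s, so $a_{j_1}\dots a_{j_{p-1}}b_{j_p}$ expands as a combination of genuine basis words $a_{j_1}\dots a_{j_{p-1}}\bar z_j$; each of these is literally a Lyndon word of shape (1)--(2) (a non-decreasing run of $z$'s followed by the unique largest letter in final position is smaller than every proper suffix), so your worry about cyclic rotations is a non-issue, while the point you needed --- that distinct index vectors give linearly independent \emph{sums}, which requires the linear independence of the $b_l$'s themselves --- is not addressed (you only argue independence for the $a_l$-chain).

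Second, your injectivity mechanism ``the $\ad(Z)$-weight plus the $\fg$-grading detects the index vector'' does not work and is not needed: for instance $a_0a_2b_0$ and $a_1a_1b_0$ have the same length and the same total number of absorbed $Z$'s, so no such weight separates them. The argument that actually closes the lemma --- which you state but then set aside as insufficient --- is simply this: the words of length $p\leq K$ are linearly independent in the universal algebra by Proposition \ref{Prop22}, and the canonical surjection from the universal Levi--Tanaka algebra onto $\fm\otimes\mathbb{C}$ is an isomorphism in degrees $-2,\dots,-K$ by the dimension hypothesis, hence the images remain independent in $N(a,b,Y)\subset\fm\otimes\mathbb{C}$. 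There is no further ``real work''; in particular $l+2\leq 2K$ plays no role inside this lemma (it is used afterwards, when $\ad(Y)^{s-1}$ resp.\ $\ad(Y)^{s-2}$ is applied to $a^l b$ to produce words of length $s\leq K$).
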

\begin{proof}
By definition, the elements $b_l$ can be expressed as linear combination of $\bar{a}_l$'s and the renaming barred elements of the basis of $\fg_{-1}^{01}$. Therefore the first claim is clear. The second claim follows from our assumption that the dimensions of spaces $\fg_{-2}, \dots,\fg_{-K}$ in the Levi--Tanaka algebra of $l$--th type of $M_0$ at $0$ are the same as dimensions of the corresponding grading components of the universal Levi--Tanaka algebra and from the linear independency of $b_l$'s.
\end{proof}

We will need the following relations:
$$[Y,Z]=0,$$
which holds by the choice of $Y$ and
$$[Y,a_{l+1}]=ad(Z)^l[Y,a]=ad(Z)^lZ=0,$$
$$[a_{l_1},[a_{l_2},[Y,b_j]]]=0,$$
which hold, because $[Y,b_j]\in \fg_0\otimes \mathbb{C}$ acts in complex linear way. These relations prove the following technical lemma.

\begin{lem*}\label{tec}
$$[Y,a_{i_1}\dots a_{i_p}b_j]=\sum_{j_1\leq \dots\leq j_p} c_{j_1,\dots,j_{p}} a_{j_1}\dots a_{j_{p-1}}b_{j_p}$$
for $p>1$ and some non--negative integers $c_{j_1,\dots,j_{p}}$. If $a_{i_l}a_{i_{l+1}}a_{i_{l+2}}=aaa_{i_{l+2}}$ for $i_{l+2}>0$, then $c_{a_1,\dots,a_{l-1},1,a_{l+2},\dots,a_p,j}>0$ and $c_{a_1,\dots,a_{l},a_{l+2},\dots,a_p,j+1}>0$.
\end{lem*}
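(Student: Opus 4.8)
The plan is to compute $[Y, a_{i_1}\cdots a_{i_p} b_j]$ directly by repeated application of the Leibniz rule for $\ad(Y)$ across the bracketed word, and then to use the auxiliary relations already recorded to kill almost all of the resulting terms. Concretely, writing $B(a_{i_1}\cdots a_{i_p}b_j)$ for the iterated bracket, the derivation property of $\ad(Y)$ distributes $[Y,-]$ over the $p+1$ letters of the word; each summand is obtained by replacing exactly one letter $a_{i_s}$ by $[Y,a_{i_s}]$ (resp. $b_j$ by $[Y,b_j]$) while leaving the rest of the bracket structure intact. First I would dispose of the term hitting $b_j$: since $[Y,b_j]\in\fg_0\otimes\C$ acts complex-linearly and hence annihilates the two-fold bracket $[a_{l_1},[a_{l_2},\cdot]]$ (the relation $[a_{l_1},[a_{l_2},[Y,b_j]]]=0$ stated just above), and since $p>1$ guarantees there are at least two $a$'s wrapped around $b_j$, this summand vanishes. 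Thus only the summands replacing some $a_{i_s}$ survive.

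Next I would analyze the surviving summands using $[Y,a_{i_s}] = [Y, \ad(Z)^{i_s}a]$. For $i_s\ge 1$ this is $\ad(Z)^{i_s}[Y,a]$ modulo commutator-correction terms — but $[Y,Z]=0$ lets one commute $\ad(Y)$ past $\ad(Z)$ freely, so $[Y,a_{i_s}] = \ad(Z)^{i_s}Z = 0$ whenever $i_s\ge 1$ (this is exactly the recorded relation $[Y,a_{l+1}]=0$), and for $i_s=0$ one has $[Y,a]=Z$. Hence the only nonzero contributions come from replacing an $a$-letter (i.e.\ a letter $a_0=a$) by $Z$. Now $Z=[Y,a]$ has degree $+1-1=0$ in the grading and acts on $\fg_{-1}\otimes\C$ as $\ad(Z)$, which by definition sends $a_l\mapsto a_{l+1}$ and $b_l\mapsto b_{l+1}$; inserting a $Z$ at position $s$ and then pushing it outward through the iterated bracket via the Leibniz rule for $\ad(Z)$ produces precisely a nonnegative-integer combination of words $a_{j_1}\cdots a_{j_{p-1}}b_{j_p}$ (one $a$-slot is consumed, the remaining indices get shifted up by the various ways $Z$ can land). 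Re-expressing each such word in the ordered basis of Proposition \ref{Prop22} via Lemma \ref{tec1} — whose linear independence statement is what makes the coefficients $c_{j_1,\dots,j_p}$ well-defined — and collecting terms in nondecreasing order $j_1\le\dots\le j_p$ gives the displayed formula with $c_{j_1,\dots,j_p}\in\mathbb{Z}_{\ge 0}$.

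Finally, for the positivity addendum: if the word contains the subword $a_{i_l}a_{i_{l+1}}a_{i_{l+2}} = a\,a\,a_{i_{l+2}}$ with $i_{l+2}>0$, there are (at least) two $a_0$-letters in the relevant window that $[Y,-]$ can hit. Hitting the \emph{second} $a$ and leaving $Z=\ad(Z)^0Z$ in place (no further shifting) contributes the word $a_1 a_{i_{l+2}}\cdots$ with the first index becoming $1$ and the rest unchanged, giving $c_{a_1,\dots,a_{l-1},1,a_{l+2},\dots,a_p,j}>0$; hitting it and then letting the produced $Z$ act once on $a_{i_{l+2}}$ (shifting it to $a_{i_{l+2}+1}$) while the index-$1$ entry is instead reabsorbed — i.e.\ the combinatorially adjacent term in the Leibniz expansion — contributes $c_{a_1,\dots,a_l,a_{l+2},\dots,a_p,j+1}>0$. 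Both survive because, by Lemma \ref{tec1}, all the monomials $a_{j_1}\cdots a_{j_{p-1}}b_{j_p}$ with nondecreasing $a$-indices are linearly independent in $N(a,b,Y)$, so there can be no cancellation among distinct index patterns.

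\emph{Main obstacle.} The delicate point is bookkeeping the Leibniz expansion carefully enough to be sure that (i) every surviving term really is of the claimed shape $a_{j_1}\cdots a_{j_{p-1}}b_{j_p}$ with \emph{one} fewer $a$ and nondecreasing indices, and (ii) the two specific coefficients in the addendum are genuinely nonzero, i.e.\ are not accidentally cancelled by some other term of the same index pattern coming from hitting a different $a$-letter and shifting differently. The linear-independence input of Lemma \ref{tec1} handles distinct patterns, but within a fixed pattern one must check the combinatorial multiplicities do not sum to zero — since all contributions are manifestly nonnegative (each arises from a nonnegative number of ways to distribute powers of $\ad(Z)$), this is automatic, which is precisely why the lemma is phrased with $c_{j_1,\dots,j_p}\ge 0$.
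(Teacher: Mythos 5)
Your expansion of $[Y,a_{i_1}\dots a_{i_p}b_j]$ is the same computation the paper performs: the paper peels off the outermost letter with the Jacobi identity one step at a time, which is exactly your Leibniz--rule distribution of $\ad(Y)$ over the $p+1$ letters, and both arguments then use the three recorded relations ($[Y,a_{l+1}]=0$, $[Y,Z]=0$, $[a_{l_1},[a_{l_2},[Y,b_j]]]=0$ with $p>1$) to reduce everything to the replacements $a\mapsto Z$ followed by a second Leibniz expansion of $\ad(Z)$ over the remaining inner letters. Your derivation of the displayed formula with nonnegative integer coefficients, and of the first positivity claim $c_{a_1,\dots,a_{l-1},1,a_{l+2},\dots,a_p,j}>0$, is correct (and in places stated more carefully than in the paper, since you identify the surviving terms without assuming the indices are nondecreasing).

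However, your justification of the second positivity claim does not work as written. The coefficient $c_{a_1,\dots,a_{l},a_{l+2},\dots,a_p,j+1}$ belongs to the word in which one of the two letters $a$ has been deleted and the index of $b_j$ has been raised to $j+1$, all $a$--indices being unchanged. The term you point to --- ``letting the produced $Z$ act once on $a_{i_{l+2}}$, shifting it to $a_{i_{l+2}+1}$'' --- produces instead the index pattern $(i_1,\dots,i_l,i_{l+2}+1,i_{l+3},\dots,i_p,j)$, and the phrase about the index-$1$ entry being ``reabsorbed'' does not correspond to any summand of the Leibniz expansion. The correct source of this coefficient, which is the last term in the paper's displayed expansion of $[Y,aaa_{i_{l+2}}\dots a_{i_p}b_j]$, is the summand in which the $Z=[Y,a]$ created by deleting one of the two $a$'s lands on the innermost letter $b_j$ and turns it into $b_{j+1}$. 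Within your framework this is a one-line repair (take $s$ to be one of the two positions carrying $a=a_0$ and $t$ the position of $b_j$), after which non-cancellation follows from the nonnegativity of all contributions and the linear independence of Lemma \ref{tec1} exactly as you argue; but as stated your identification of the relevant term would fail to establish $c_{a_1,\dots,a_{l},a_{l+2},\dots,a_p,j+1}>0$.
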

\begin{proof}
If $a_{i_1}\neq a$, then $[Y,a_{i_1}\dots a_{i_p}b_j]=0$ follows by using the Jacoby identity and the above relations.

If $a_{i_1}=a$, then by the Jacoby identity 
$$[Y,a_{i_1}\dots a_{i_p}b_j]=[Z,a_{i_2}\dots a_{i_p}b_j]+[a,[Y,a_{i_2}\dots a_{i_p}b_j]].$$ By definition, $[Z,a_{i_2}\dots a_{i_p}b_j]$ can be expressed in the claimed way, because $a$'s commute. The second summand can be expressed in the claimed way by the iterative use of the Jacoby identity, because either  $ad(a)^{l-1}[Z,a_{i_l}\dots a_{i_p}b_j]$ can be expressed in the claimed way or $[Y,a_{i_l}\dots a_{i_p}b_j]=0$ for some $l$ or $ad(a)^p[Y,b_j]=0$ and there are no other types of summands appearing after sufficient number of applications of the Jacobi identity. Moreover, if we arrive in the iteration to the situation  $[Y,aaa_{i_{l+2}}\dots a_{i_p}b_j]$, then $c_{a_1,\dots,a_{l-1},1,a_{l+2},\dots,a_p,j}>0$, because $[Y,aaa_{i_{l+2}}\dots a_{i_p}b_j]=a_1a_{i_{l+2}}\dots a_{i_p}b_j+[a,[Z,a_{i_{l+2}}],\dots a_{i_p}b_j]]+\dots+aa_{i_{l+2}}\dots a_{i_p}b_{j+1}.$
\end{proof}

The element $a^{l}b\in \fg_{-l-1}$ is by assumption $0$. If $l= 2s-2$, then $0=ad(Y)^{s-1}a^{l}b$ can be expressed according to the Lemma \ref{tec} as a non--negative combinations of the elements $a_{j_1}\dots a_{j_{s-1}}b_{j_s}$. Since $s\leq K$, these elements (if non--zero) are linear independent due to  Lemma \ref{tec1} and this combination has to have all coefficients $0$.  However, if $\fg_1\neq 0$, then the coefficient by $a_1^{k-1}b$ (which is non--zero by the choice of $X,Y$) is positive due to Lemma \ref{tec}, contradiction.

If $l=2s-3$, then $0=ad(Y)^{s-2}a^{l}b$ can be expressed according to the Lemma \ref{tec} as a non--negative combinations of the elements $a_{j_1}\dots a_{j_{s-1}}b_{j_s}$. Since $s\leq K$, these elements are linear independent (if non--zero) due to  Lemma \ref{tec1} and this combination has to have all coefficients $0$. However, if $\fg_1\neq 0$, then the coefficient by $aa_1^{k-2}b$ (which is non--zero by the choice of $X,Y$) is positive due to Lemma \ref{tec}, contradiction.

Therefore $\fg_1=\fg_+=0$.

\section{Standard real submanifolds with Tanaka prolongation that is simple complex Lie algebra}\label{Sec5}

In this Section, we recall the classification of all non--degenerate pseudo--complex fundamental graded Lie algebra $(\fm,I)$ that have a simple complex Lie algebra $\fg$ as the Tanaka prolongation. The full classification all non--degenerate pseudo--complex fundamental graded Lie algebra $(\fm,I)$ with semisimple Tanaka prolongation can be found in \cite{MS}, but two main results of the classification are that semisimple Tanaka prolongation is a sum of simple Tanaka prolongation and that a complexification of real Tanaka prolongation is a Tanaka prolongation of the complexification of  $(\fm,I)$. Since we are mainly interested in the dimension of $\fg_{-3}$, it suffices to recall just the cases when the Tanaka prolongation is a simple complex Lie algebra and for this, it is convenient to use the results from \cite{A}, because we can easily summarize this classification (up to symmetries of Dynkin diagrams) of all such non--degenerate pseudo--complex fundamental graded Lie algebra $(\fm,I)$ in Table \ref{T1}. 

\begin{table}\caption{Pseudo--complex fundamental graded Lie algebras with simple complex Tanaka prolongation}\label{T1}
\noindent
\begin{tabular}{|c|c|c|c|}
\hline
$\mathfrak{g}$ & $\Xi$ &$l$& restrictions\\
\hline
$A_n$&$\{a_1,\dots, a_s\}$&  $=s$ & \\
\hline
$B_n$& $\{a_1,\dots, a_s\}$& $\geq2s-1$ & $a_{s-1}=a_s-1$\\
\hline
$C_n$& $\{a_1,\dots, a_s\}$&$=2s-1$ &$a_s=n$\\
\hline
$D_n$& $\{a_1,\dots, a_s\}$&  $\geq 2s-1$ & $a_{s-1}=a_s-1,a_s<n-1$\\
& $\{a_1,\dots, a_s\}$& $\geq 2s-2$& $a_s=n-1$\\
&$\{a_1,\dots, a_s\}$& $\geq 2s-3$& $a_{s-2}=n-2,a_{s-1}=n-1,a_s=n$\\
\hline
$G_2$ & $\{1,2\}$&$=5$&\\
\hline
$F_4$ & $\{a_1,\dots, a_s\}$&$\geq 6$&$a_b=2,$ except $\{2,4\}$\\
\hline
$E_6$ & $\{a_1,\dots, a_s\}$&$\geq 6$&$s>3,a_b=3$ or $\{1,2,4,5\}$\\
 & $\{a_1,a_2,a_3\}$&$\geq 4$&except $\{1,3,5\},\{1,4,6\},\{1,5,6\},\{2,4,6\}$ \\
 & $\{a_1,a_2\}$&$\geq 2$&except $\{1,3\},\{1,4\}$ \\
\hline
$E_7$ & $\{a_1,\dots, a_s\}$&$\geq 11$ &$s>4,a_b=4$ or $\{1,2,3,5,6\}$\\
& $\{a_1,\dots, a_s\}$&$\geq 8$&$s=4$, except $\{1,2,4,6\}$, $\{a_1,a_2,a_3,7\}$\\
&&& for $a_1\in \{1,2,3\},a_3\in \{5,6\}$   \\
 & $\{a_1,a_2,a_3\}$&$\geq 5$&except $\{1,2,5\},\{1,3,6\},\{1,2,4\}, \{1,4,6\},$ \\
&&&$ \{2,4,6\},\{a_1,a_2,7\}$ for $a_1\in \{1,2,3\},a_2\in \{5,6\}$\\
 & $\{a_1,a_2\}$&$\geq 3$&except $\{1,3\},\{1,4\},\{1,5\},\{1,7\},$\\
&&&$\{2,4\},\{2,5\},\{2,6\},\{3,6\},\{4,6\}$\\
\hline
$E_8$ & $\{a_1,\dots, a_s\}$&$\geq 20$ &$s>5,a_b=5$ or $\{1,2,3,4,6,7\}$\\
 & $\{a_1,\dots, a_s\}$&$\geq 15$&$s=5$, except $\{1,2,3,5,7\}, \{a_1,a_2,a_3,a_4,8\}$ \\
&&&for $a_1\in \{1,2,3,4\},a_4\in \{6,7\}$    \\
 & $\{a_1,\dots, a_s\}$&$\geq 11$&$s=4$, except $\{1,2,5,7\},\{1,3,5,7\},\{2,3,5,7\},$\\
&&&$\{1,2,3,5\},\{1,2,3,6\},\{1,2,4,7\}, \{a_1,a_2,a_3,8\}$\\
&&&for $a_1\in \{1,2,3,4\},a_3\in \{6,7\}$   \\
 & $\{a_1,a_2,a_3\}$&$\geq 7$&except $\{1,2,5\}, \{1,3,5\},\{2,3,5\},\{1,3,7\},$\\
&&&$\{1,5,7\},\{2,5,7\},\{3,5,7\},\{1,2,6\},\{1,3,6\},$\\
&&&$\{2,3,6\},\{1,2,4\},\{1,4,7\},\{2,4,7\},\{1,2,8\}$ \\
&&&$\{a_1,a_2,8\}$ for $a_1\in \{1,2,3,4\},a_2\in \{6,7\}$  \\
 & $\{a_1,a_2\}$&$\geq 5$&except $\{1,a_2\},a_2\neq 2, \{2,a_2\},a_2\neq 3$, \\
&&& $\{3,a_2\},a_2\notin \{4,8\}, \{a_1,7\}, a_1\in \{4,5\}$,   \\
\hline
\end{tabular}
\end{table}

In the table, the column $\fg$ specifies the complex Lie algebra $\fg$ with arbitrary but fixed set of positive simple roots that we number according to \cite{P}. The column $\Xi$ specifies the $|l|$--grading of $\fg$ such that the Lie algebra $\fm$ is the negative part of the grading for arbitrary $s>1$ (if there are no other restrictions). Precisely, the grade of each root space of $\fg$ is given by a height functional on the root system that is $1$ on positive simple roots in $\Xi$ and is $0$ on the remaining simple roots. Consequently the Cartan subalgebra of $\fg$ is contained in $\fg_0$ and $\fm$ is the sum of the root spaces of roots of negative height. In particular, each root space in $\fg_{-1}$  corresponds to some simple root in $\Xi$. The value of $l$ is the height of the highest root and we present the lower bound on $l$ depending on $|\Xi|$.

The gradings defined by sets $\Xi$ in the Table \ref{T1} have the property that the set $\Xi$ can be decomposed to the sets $\Xi^+$ and $\Xi^-$ such that the elements from $\Xi^+$ and $\Xi^-$ alternate, when one moves along the edges of the Dynkin diagram (ignoring the roots outside of $\Xi$). The complex structure $I$ on $\fg_{-1}$ such that $(\fm,I)$ is a non--degenerate pseudo--complex fundamental graded Lie algebra is then defined as acting by $\pm i$ eigenvalue on the roots space corresponding simple roots in $\Xi^+$ or $\Xi^-$, respectively.

The results in the Section \ref{Sec2} allow us to explicitly compute the defining equations (\ref{bg}) and the infinitesimal CR automorphisms of the standard real submanifolds corresponding to the entries of the Table \ref{T1}. Let us recall that all gradings of simple Lie algebras are symmetric in the sense that $dim(\fg_{-i})=dim(\fg_i)$, cf. \cite{P}. Therefore the Theorem \ref{Thm3} follows immediately from this classification and the Corollary \ref{cor2.5}. The further details on on parabolic geometries can be found in \cite{P}.

\section{An example of model surface with infinitesimal CR automorphisms of positive weighted degree}\label{Sec51}

Consider a model surface in $\mathbb{C}^6$ given by the following defining equations:
\begin{align*}
&\mbox{Im}(w_{1})=z_{1}\bar z_{2}+\bar z_{1}z_{2},\\
&\mbox{Im}(w_{2})=-iz_{1}\bar z_{2}+i\bar z_{1}z_{2},\\
&\mbox{Im}(w_{3})=-z_1^2\bar z_{2}-\bar z_{1}^2z_2+\frac12(\mbox{Re}(w_{2})+i\mbox{Re}(w_{1}))\bar z_{1}+\frac12(\mbox{Re}(w_{2})-i\mbox{Re}(w_{1}))z_1,\\
&\mbox{Im}(w_{4})=i(z_1^2\bar z_{2}-\bar z_{1}^2z_2+\frac12(\mbox{Re}(w_{2})+i\mbox{Re}(w_{1}))\bar z_{1}-\frac12(\mbox{Re}(w_{2})-i\mbox{Re}(w_{1}))z_1),
\end{align*}
where $[z_1]=[z_2]=1, [w_1]=[w_2]=2, [w_3]=[w_4]=3$, i.e., the Bloom--Graham type is $(2,2,3,3)$, which is not totally non--degenerate. It is a standard real submanifold corresponding to the entry $\fg=\frak{sp}(4,\mathbb{C}),\Xi=\{1,2\}$ of the Table \ref{T1}. This means that Lie algebra of infinitesimal CR automorphisms of this model surface is isomorphic to $\frak{sp}(4,\mathbb{C})$ and is $|3|$--graded. Below, we show how to compute that the algebra infinitesimal CR automorphism of this model surface consists of the following weighted homogeneous (for $[\frac{\partial}{\partial z_1}]=[\frac{\partial}{\partial z_2}]=-1,$ $[\frac{\partial}{\partial w_1}]=[\frac{\partial}{\partial w_2}]=-2$, $[\frac{\partial}{\partial w_3}]=[\frac{\partial}{\partial w_4}]=-3$) holomorphic vector fields on $\mathbb{C}^6$:

\begin{align*}
&\fg_{-3}=\left \{\frac{\partial}{\partial w_3},\frac{\partial}{\partial w_4} \right \},\\
&\fg_{-2}=\left \{\frac{\partial}{\partial w_1}+z_1\frac{\partial}{\partial w_3}-iz_1\frac{\partial}{\partial w_4},\frac{\partial}{\partial w_2}+iz_1\frac{\partial}{\partial w_3}+z_1\frac{\partial}{\partial w_4}\right \},\\
&\fg_{-1}=\left \{ \frac12\frac{\partial}{\partial z_2}+iz_1\frac{\partial}{\partial w_1}+z_1\frac{\partial}{\partial w_2},\frac{i}2\frac{\partial}{\partial z_2}+z_1\frac{\partial}{\partial w_1}-iz_1\frac{\partial}{\partial w_2},\right.  \\
&\frac12\frac{\partial}{\partial z_1}+iz_2\frac{\partial}{\partial w_1}-z_2\frac{\partial}{\partial w_2}-\frac{3w_1-iw_2}{2}\frac{\partial}{\partial w_3}-\frac{3w_2+iw_1}{2}\frac{\partial}{\partial w_4},\\
&\left. \frac{i}2\frac{\partial}{\partial z_1}+z_2\frac{\partial}{\partial w_1}+iz_2\frac{\partial}{\partial w_2}+\frac{3w_2+iw_1}{2}\frac{\partial}{\partial w_3}-\frac{3w_1-iw_2}{2}\frac{\partial}{\partial w_4}\right \},\\
&\fg_{0}=\left \{
-z_1\frac{\partial}{\partial z_1}-w_1\frac{\partial}{\partial w_1}-w_2\frac{\partial}{\partial w_2}-2w_3\frac{\partial}{\partial w_3}-2w_4\frac{\partial}{\partial w_4} ,\right.\\
&z_1\frac{\partial}{\partial z_1}-2z_2\frac{\partial}{\partial z_2}-w_1\frac{\partial}{\partial w_1}-w_2\frac{\partial}{\partial w_2},\\
&-iz_1\frac{\partial}{\partial z_1}+w_2\frac{\partial}{\partial w_1}-w_1\frac{\partial}{\partial w_2}+2w_4\frac{\partial}{\partial w_3}-2w_3\frac{\partial}{\partial w_4},\\
&\left. iz_1\frac{\partial}{\partial z_1}+2iz_2\frac{\partial}{\partial z_2}+w_2\frac{\partial}{\partial w_1}-w_1\frac{\partial}{\partial w_2} \right\},\\
\end{align*}

\begin{align*}
&\fg_{1}=\left\{
-2z_1^2\frac{\partial}{\partial z_1}-(w_2+iw_1)\frac{\partial}{\partial z_2}-(w_3+iz_1w_2+z_1w_1)\frac{\partial}{\partial w_1}\right. \\
&-(w_4+z_1w_2-iz_1w_1)\frac{\partial}{\partial w_2}-(iz_1w_4+z_1w_3)\frac{\partial}{\partial w_3}+(iz_1w_3-z_1w_4)\frac{\partial}{\partial w_4},\\
&-2iz_1^2\frac{\partial}{\partial z_1}-(w_1-iw_2)\frac{\partial}{\partial z_2}+(w_4-iz_1w_1+z_1w_2)\frac{\partial}{\partial w_1}\\
&-(w_3+iz_1w_2+z_1w_1)\frac{\partial}{\partial w_2}+(z_1w_4-iz_1w_3)\frac{\partial}{\partial w_3}-(z_1w_3+iz_1w_4)\frac{\partial}{\partial w_4},\\
&\frac{w_2-iw_1}{2}\frac{\partial}{\partial z_1}-2z_2^2 \frac{\partial}{\partial z_2}+ (-w_1+iw_2)z_2 \frac{\partial}{\partial w_1}-z_2(iw_1+w_2) \frac{\partial}{\partial w_2}\\
&-\frac{i(-w_1+iw_2)^2}{2} \frac{\partial}{\partial w_3}+ \frac{(-w_1+iw_2)^2}{2} \frac{\partial}{\partial w_4},\\
&-\frac{iw_2+w_1}{2}\frac{\partial}{\partial z_1}-2iz_2^2 \frac{\partial}{\partial z_2}+ i(-w_1+iw_2)z_2 \frac{\partial}{\partial w_1}-z_2(-w_1+iw_2) \frac{\partial}{\partial w_2}\\
&\left.+ \frac{(-w_1+iw_2)^2}{2} \frac{\partial}{\partial w_3}+ \frac{i(-w_1+iw_2)^2}{2} \frac{\partial}{\partial w_4}\right \},\\
&\fg_{2}=\left\{
\frac{w_3+iw_4-z_1(w_1+iw_2) }{2}\frac{\partial}{\partial z_1}-2z_2(w_1-iw_2)\frac{\partial}{\partial z_2}\right. \\
&+(-w_1^2+w_2^2+z_2(iw_3+w_4))\frac{\partial}{\partial w_1}+(-2w_2w_1-z_2w_3+iz_2w_4)\frac{\partial}{\partial w_2}\\
&+\frac{(iw_2-3w_1)w_3+(3w_2+iw_1)w_4}{2}\frac{\partial}{\partial w_3}-\frac{(3w_2+iw_1)w_3+(3w_1-iw_2)w_4}{2}\frac{\partial}{\partial w_4},\\
&-\frac{w_4-iw_3-z_1(w_2-iw_1) }{2}\frac{\partial}{\partial z_1}+2z_2(w_2+iw_1)\frac{\partial}{\partial z_2}\\
&+(2w_2w_1+z_2w_3-iz_2w_4)\frac{\partial}{\partial w_1}+(-w_1^2+w_2^2+iz_2w_3+z_2w_4)\frac{\partial}{\partial w_2} \\
&\left.+\frac{(3w_2+iw_1)w_3+(3w_1-iw_2)w_4}{2}\frac{\partial}{\partial w_3}+\frac{(iw_2-3w_1)w_3+(3w_2+iw_1)w_4}{2}\frac{\partial}{\partial w_4}\right \},\\
&\fg_{3}=\left\{
-z_1(iw_4+w_3)\frac{\partial}{\partial z_1}-\frac{i(w_1-iw_2)^2}{2}\frac{\partial}{\partial z_2}-(w_1w_3-w_2w_4)\frac{\partial}{\partial w_1}\right. \\
&-(w_1w_4+w_2w_3)\frac{\partial}{\partial w_2}-(w_3^2-w_4^2)\frac{\partial}{\partial w_3}-2w_3w_4\frac{\partial}{\partial w_4},\\
&z_1(w_4-iw_3)\frac{\partial}{\partial z_1}-\frac{(w_1-iw_2)^2}{2}\frac{\partial}{\partial z_2}+(w_1w_4+w_2w_3)\frac{\partial}{\partial w_1}\\
&\left.-(w_1w_3-w_2w_4)\frac{\partial}{\partial w_2}+2w_3w_4\frac{\partial}{\partial w_3}-(w_3^2-w_4^2)\frac{\partial}{\partial w_4}\right \},
\end{align*}

To construct this example starting from the entry $\fg=\frak{sp}(4,\mathbb{C}),\Xi=\{1,2\}$ of the Table \ref{T1}, we consider $\frak{sp}(4,\mathbb{C})$ as the Lie algebra of derivations preserving the anti--symmetric bilinear form $((a_1,\dots,a_4),(b_1,\dots,b_4))\mapsto a_1b_4-a_4b_1+a_2b_3-a_3b_2$ on $\mathbb{C}^4$, which we identify with $\mathbb{R}^{8}$. This means that for $\Xi=\{1,2\}$ the Lie algebra $\fm$ consist of lower triangular matrices in $\frak{sp}(4,\mathbb{C})$ and the corresponding $|3|$--grading of $\frak{sp}(4,\mathbb{C})$ induces the following grading of $\fm$:
\[\begin{pmatrix}0 & 0 & 0 & 0 & 0 & 0 & 0 & 0 \\
0 & 0 & 0 & 0 & 0 & 0 & 0 & 0 \\
{{x}_{-1,1}} & -{{x}_{-1,2}} & 0 & 0 & 0 & 0 & 0 & 0 \\
{{x}_{-1,2}} & {{x}_{-1,1}} & 0 & 0 & 0 & 0 & 0 & 0 \\
{{x}_{-2,5}} & -{{x}_{-2,6}} & {{x}_{-1,3}} & -{{x}_{-1,4}} & 0 & 0 & 0 & 0 \\
{{x}_{-2,6}} & {{x}_{-2,5}} & {{x}_{-1,4}} & {{x}_{-1,3}} & 0 & 0 & 0 & 0 \\
{{x}_{-3,7}} & -{{x}_{-3,8}} & {{x}_{-2,5}} & -{{x}_{-2,6}} & -{{x}_{-1,1}} & {{x}_{-1,2}} & 0 & 0 \\
{{x}_{-3,8}} & {{x}_{-3,7}} & {{x}_{-2,6}} & {{x}_{-2,5}} & -{{x}_{-1,2}} & -{{x}_{-1,1}} & 0 & 0 \\
\end{pmatrix},\]
where all $x_{a,b}\in \fg_a$ are real. The complex structure $I$ on $\fg_{-1}$ corresponds to $i$ action on ${x}_{-1,1}+i{x}_{-1,2}$ and ${x}_{-1,4}+i{x}_{-1,3}$. Therefore the element $-\frac12(X_1+iI(X_1))\in \fg^{01}_{-1}$ that is necessary for the construction of the Naruki's embedding has $\frac{i{x}_{-1,2}-{x}_{-1,1}}{2}$ instead of ${x}_{-1,1}$, $-\frac{{x}_{-1,2}+i{x}_{-1,1}}{2}$ instead of ${x}_{-1,2}$, $-\frac{i{x}_{-1,4}+{x}_{-1,3}}{2}$ instead of ${x}_{-1,3}$, $ \frac{i{x}_{-1,3}-{x}_{-1,4}}{2}$ instead of ${x}_{-1,4}$ and $0$ elsewhere. Therefore the Lie subalgebra $\fn$ of $\fm\otimes\mathbb{C}$ is the following graded Lie algebra:
\[\begin{pmatrix}0 & 0 & 0 & 0 & 0 & 0 & 0 & 0 \\
0 & 0 & 0 & 0 & 0 & 0 & 0 & 0 \\
{{y}_{-1,1}} & i{{y}_{-1,1}} & 0 & 0 & 0 & 0 & 0 & 0 \\
-i{{y}_{-1,1}} & {{y}_{-1,1}} & 0 & 0 & 0 & 0 & 0 & 0 \\
{{y}_{-2,3}} & -{{y}_{-2,4}} & -i{{y}_{-1,2}} & -{{y}_{-1,2}} & 0 & 0 & 0 & 0 \\
{{y}_{-2,4}} & {{y}_{-2,3}} & {{y}_{-1,2}} & -i{{y}_{-1,2}} & 0 & 0 & 0 & 0 \\
{{y}_{-3,5}} & -{{y}_{-3,6}} & {{y}_{-2,3}} & -{{y}_{-2,4}} & -{{y}_{-1,1}} & -i{{y}_{-1,1}} & 0 & 0 \\
{{y}_{-3,6}} & {{y}_{-3,5}} & {{y}_{-2,4}} & {{y}_{-2,3}} & i{{y}_{-1,1}} & -{{y}_{-1,1}} & 0 & 0 \\
\end{pmatrix},\]
where all $y_{a,b}\in \fg_{a}\otimes\mathbb{C}$ are complex.

Since we are working with matrices, we can compute $\phi: \fm\to \fn=\mathbb{C}^6$ directly from the definition without use of the Baker--Campbell--Hausdorff formula. Indeed, in this example $\exp(X)=X^0+X^1+\frac12X^2+\frac16X^3$ holds for all $X\in \fm$, while $\exp^{-1}(Z)=(Z-Z^0)^1$ holds for all $Z\in\exp(\fn)$. In the coordinates given by the above matrices, we get the following formula for $\phi$:
\begin{align*}
{y}_{-1,1}&=\frac{{x}_{-1,1}+i{x}_{-1,2}}{2}\\
{y}_{-1,2}&=\frac{{x}_{-1,4}+i{x}_{-1,3}}{2}\\
{{y}_{-2,3}}&={{x}_{-2,5}}+\frac{i}{2}({x}_{-1,1}{x}_{-1,4}+{x}_{-1,2}{x}_{-1,3})\\
{{y}_{-2,4}}&={{x}_{-2,6}}+\frac{i}{2}({x}_{-1,2}{x}_{-1,4}-{x}_{-1,1}{x}_{-1,3})\\
{y}_{-3,5}&={{x}_{-3,7}}+\frac12({x}_{-1,2}+i{x}_{-1,1})(i{{x}_{-2,5}}+{{x}_{-2,6}})
+\frac1{12}(3i{x}_{-1,2}^2-2{x}_{-1,1}{x}_{-1,2}\\
&-3i{x}_{-1,1}^2){x}_{-1,4}-\frac1{12}({x}_{-1,2}^2+6i{x}_{-1,1}{x}_{-1,2}-{x}_{-1,1}^2){x}_{-1,3}
\\
{{y}_{-3,6}}&={{x}_{-3,8}}-\frac12({x}_{-1,2}+i{x}_{-1,1})({{x}_{-2,5}}-i{{x}_{-2,6}})-\frac1{12}(3i{x}_{-1,2}^2-2{x}_{-1,1}{x}_{-1,2}\\
&-3i{x}_{-1,1}^2){x}_{-1,3}-\frac1{12}({x}_{-1,2}^2+6i{x}_{-1,1}{x}_{-1,2}-{x}_{-1,1}^2){x}_{-1,4}\\
\end{align*}

Next in order to get the defining equations, we eliminate step by step the variables $x_{a,b}$  from the formula for $\phi$. In the first step, we denote $z_1={y}_{-1,1}$, $z_2={y}_{-1,2}$ and we obtain ${x}_{-1,1}=z_1+\bar z_{1},{x}_{-1,1}=-i(z_1-\bar z_{1}),{x}_{-1,3}=-i(z_2-\bar z_{2}),{x}_{-1,4}=z_2+\bar z_{2}$. After elimination of the variable $x_{-1,b}$, the equations of degree two in the formula for $\phi$ become 
\begin{align*}
{{y}_{-2,3}}&={{x}_{-2,5}}+i(z_1\bar z_{2}+z_2\bar z_{1})\\
{{y}_{-2,4}}&={{x}_{-2,6}}+z_1\bar z_{2}-z_2\bar z_{1}\\
\end{align*}
and we see that $\mbox{Re}(w_1)=\mbox{Re}({{y}_{-2,3}})={{x}_{-2,5}}$, $\mbox{Re}(w_2)=\mbox{Re}({{y}_{-2,4}})={{x}_{-2,6}}$  and get the first two defining equations 
\begin{align*}
&\mbox{Im}(w_{1})=z_{1}\bar z_{2}+\bar z_{1}z_{2},\\
&\mbox{Im}(w_{2})=-iz_{1}\bar z_{2}+i\bar z_{1}z_{2}.
\end{align*}
After elimination of the variable $x_{-2,b}$, the equations of degree three in the formula for $\phi$ become 
\begin{align*}
{y}_{-3,5}&={{x}_{-3,7}}-\frac2{3}i(z_1^2\bar z_{2}+2\bar z_{1}^2z_2)+(i\mbox{Re}(w_2)-\mbox{Re}(w_1))\bar z_{1}\\
{y}_{-3,6}&={{x}_{-3,8}}-\frac2{3}(z_1^2\bar z_{2}-2\bar z_{1}^2z_2)-(i\mbox{Re}(w_1)+\mbox{Re}(w_2))\bar z_{1}
\end{align*}
and we see that $\mbox{Re}(w_3)=\mbox{Re}({{y}_{-3,5}})={{x}_{-3,7}}+\frac13i(z_1^2\bar z_{2}-\bar z_{1}^2z_2)+\frac12i(\mbox{Re}(w_{2})+i\mbox{Re}(w_{1}))\bar z_{1}-\frac12(i\mbox{Re}(w_{2})+\mbox{Re}(w_{1}))z_1$ and $\mbox{Re}(w_4)=\mbox{Re}({{y}_{-3,6}})={{x}_{-3,8}}+\frac13(z_1^2\bar z_{2}+\bar z_{1}^2z_2)-\frac12(\mbox{Re}(w_{2})+i\mbox{Re}(w_{1}))\bar z_{1}-\frac12(\mbox{Re}(w_{2})-i\mbox{Re}(w_{1}))z_1$ and get the remaining defining equations
\begin{align*}
&\mbox{Im}(w_{3})=-z_1^2\bar z_{2}-\bar z_{1}^2z_2+\frac12(\mbox{Re}(w_{2})+i\mbox{Re}(w_{1}))\bar z_{1}+\frac12(\mbox{Re}(w_{2})-i\mbox{Re}(w_{1}))z_1,\\
&\mbox{Im}(w_{4})=i(z_1^2\bar z_{2}-\bar z_{1}^2z_2+\frac12(\mbox{Re}(w_{2})+i\mbox{Re}(w_{1}))\bar z_{1}-\frac12(\mbox{Re}(w_{2})-i\mbox{Re}(w_{1}))z_1).
\end{align*}

For the computation of the infinitesimal CR automorphisms of our example, it is convenient to decompose the formula from Section \ref{Sec24} to two parts. First part is the formula
\begin{align*}
&\frac{d}{dt}|_{t=0}g(tZ_{a,b},Y_{c,d})=Z_{-1,1}\frac{\partial}{\partial z_1}+Z_{-1,2}\frac{\partial}{\partial z_2}+Z_{-2,3}\frac{\partial}{\partial w_1}+Z_{-2,4}\frac{\partial}{\partial w_2}\\
&+(Z_{-3,5}-Z_{-1,1}(Y_{-2,3}+iY_{-2,4})+Y_{-1,1}(Z_{-2,3}+iZ_{-2,4}))\frac{\partial}{\partial w_3}\\
&+(Z_{-3,6}-Z_{-1,1}(Y_{-2,4}-iY_{-2,3})+Y_{-1,1}(Z_{-2,4}-iZ_{-2,3}))\frac{\partial}{\partial w_4}
\end{align*}
for the (holomorphic) functions $Z_{a,b},Y_{a,b}$ valued in $\fn\cap (\fg_{a}\otimes \mathbb{C})$.

The second formula $Z_{a,b}=Ad(\exp(Y_{a,b}))(Ad(\exp(-Y_{a,b}))(X))_{\fn}$ for $X\in \frak{sp}(4,\mathbb{C})$ and $Y_{a,b}\in \fn$ is quite complicated and not worth to present here. Nevertheless, it can be easily expressed by multiplication of the matrices and by the projection to $\fn$ and therefore computed in the most of the computer algebras systems (Maple, Maxima,\dots).

\appendix

\section{Further examples}\label{Sec6}

Let us provide some more examples with non--trivial $\fg_+$ that are constructed in the same way as the example in the Section \ref{Sec51}. The first example is interesting, because it provides a realization of the exceptional Lie algebra $\frak{g}_2(\mathbb{C})$ as a Lie algebra of holomorphic vector fields on $\mathbb{C}^{10}$. The second example highlights that our construction also works in the case when the Tanaka prolongation $\fg$ is a simple real Lie algebra. We will not present the full construction of these examples and provide only the minimal sets of infinitesimal CR automorphisms that generate whole $\fg$. These sets consist of a special element $\mathcal{J}$ of $\fg_0$ that is acting as $I$ on $\fg_{-1}$, the real part $\mbox{Re}(\fg_{-l})$ of $\fg_{-l}$ and the real part $\mbox{Re}(\fg_{-1})$ of $\fg_1$.

\subsection{Example $\fg=\frak{g}_2(\mathbb{C}), \Xi=\{1,2\}$}

There is a unique entry in the Table \ref{T1} concerning the exceptional Lie algebra $\frak{g}_2(\mathbb{C})$. The defining equations of the corresponding standard real submanifold in $\mathbb{C}^{10}$ are in fact obtained by adding the following four more equations and variables to the four equations from the example in Section \ref{Sec51}:

\begin{align*}
&\mbox{Im}(w_{1})=z_{1}\bar z_{2}+\bar z_{1}z_{2},\\
&\mbox{Im}(w_{2})=-iz_{1}\bar z_{2}+i\bar z_{1}z_{2},\\
&\mbox{Im}(w_{3})=-z_1^2\bar z_{2}-\bar z_{1}^2z_2+\frac12(\mbox{Re}(w_{2})+i\mbox{Re}(w_{1}))\bar z_{1}+\frac12(\mbox{Re}(w_{2})-i\mbox{Re}(w_{1}))z_1,\\
&\mbox{Im}(w_{4})=i(z_1^2\bar z_{2}-\bar z_{1}^2z_2+\frac12(\mbox{Re}(w_{2})+i\mbox{Re}(w_{1}))\bar z_{1}-\frac12(\mbox{Re}(w_{2})-i\mbox{Re}(w_{1}))z_1),\\
&\mbox{Im}(w_{5})=
\frac14(z_1^3\bar z_{2}+\bar z_{1}^3z_2)
-\frac18(i\mbox{Re}(w_1)+\mbox{Re}(w_2))\bar z_{1}^2
+\frac18(i\mbox{Re}(w_1)-\mbox{Re}(w_2))z_1^2\\
&+\frac38(i\mbox{Re}(w_3)+\mbox{Re}(w_4))\bar z_{1}
+\frac38(-i\mbox{Re}(w_3)+\mbox{Re}(w_4))z_1,\\
&\mbox{Im}(w_{6})=\frac{i}{4}(z_1^3\bar z_{2}-\bar z_{1}^3z_2)+\frac18(i\mbox{Re}(w_2)-\mbox{Re}(w_1))\bar z_{1}^2-\frac18(\mbox{Re}(w_1)+i\mbox{Re}(w_2))z_1^2\\
&+\frac38(-i\mbox{Re}(w_4)+\mbox{Re}(w_3))\bar z_{1}+\frac38(\mbox{Re}(w_3)+i\mbox{Re}(w_4))z_1),\\
&\mbox{Im}(w_{7})=\frac18(z_1^3\bar z_{2}^2+\bar z_{1}^3z_2^2)+\frac{5}{16}(i\mbox{Re}(w_1)-\mbox{Re}(w_2))z_1^2\bar z_{2}-\frac{5}{16}(i\mbox{Re}(w_1)+\mbox{Re}(w_2))\bar z_{1}^2z_2\\
&+\frac{9}{16}(-\mbox{Re}(w_4)+i\mbox{Re}(w_3))z_1\bar z_{2}-\frac{9}{16}(\mbox{Re}(w_4)+i\mbox{Re}(w_3))\bar z_{1}z_2+\frac{1}{4}(i\mbox{Re}(w_6)+\mbox{Re}(w_6))\bar z_{2}\\
&+\frac{1}{4}(-i\mbox{Re}(w_5)+\mbox{Re}(w_6))z_2
+\frac18(-\mbox{Re}(w_1)^2+2i\mbox{Re}(w_2)\mbox{Re}(w_1)+\mbox{Re}(w_2)^2)\bar z_{1}\\
&+\frac18(-\mbox{Re}(w_1)^2-2i\mbox{Re}(w_2)\mbox{Re}(w_1)+\mbox{Re}(w_2)^2)z_1,\\
&\mbox{Im}(w_{8})=
\frac{i}{8}(iz_1^3\bar z_{2}^2-\bar z_{1}^3z_2^2)
-\frac{5}{16}(\mbox{Re}(w_1)+i\mbox{Re}(w_2))z_1^2\bar z_{2}
+\frac{5}{16}(i\mbox{Re}(w_2)-\mbox{Re}(w_1))\bar z_{1}^2z_2\\
&-\frac{9}{16}(\mbox{Re}(w_3)+i\mbox{Re}(w_4))z_1\bar z_{2}
+\frac{9}{16}(-\mbox{Re}(w_3)+i\mbox{Re}(w_4))\bar z_{1}z_2
+\frac14(-\mbox{Re}(w_5)+i\mbox{Re}(w_6))\bar z_{2}\\
&-\frac14(\mbox{Re}(w_5)+i\mbox{Re}(w_6))z_2
+\frac18(i\mbox{Re}(w_1)^2+2\mbox{Re}(w_2)\mbox{Re}(w_1)-i\mbox{Re}(w_2)^2)\bar z_{1}\\
&+\frac18(-i\mbox{Re}(w_1)^2+2\mbox{Re}(w_2)\mbox{Re}(w_1)+i\mbox{Re}(w_2)^2)z_1,
\end{align*}
where $[z_1]=[z_2]=1, [w_1]=[w_2]=2, [w_3]=[w_4]=3,[w_5]=[w_6]=4, [w_7]=[w_8]=5$, i.e., the Bloom--Graham type is $(2,2,3,3,4,4,5,5)$. The Lie algebra of infinitesimal CR automorphisms of this model surface is isomorphic to $\frak{g}_2(\mathbb{C})$ and is $|5|$--graded. Moreover, it is generated by the following weighted homogeneous (for $[\frac{\partial}{\partial z_1}]=[\frac{\partial}{\partial z_2}]=-1,$ $[\frac{\partial}{\partial w_1}]=[\frac{\partial}{\partial w_2}]=-2$, $[\frac{\partial}{\partial w_3}]=[\frac{\partial}{\partial w_4}]=-3$,$[\frac{\partial}{\partial w_5}]=[\frac{\partial}{\partial w_6}]=-4$,$[\frac{\partial}{\partial w_7}]=[\frac{\partial}{\partial w_8}]=-5$) holomorphic vector fields on $\mathbb{C}^{10}$:
\begin{align*}
&\mbox{Re}(\fg_{-5})=\{\frac{\partial}{\partial w_7}\},\\
&\mathcal{J}=iz_1\frac{\partial}{\partial z_1}-iz_2\frac{\partial}{\partial z_2}-2w_2\frac{\partial}{\partial w_1}+2w_1\frac{\partial}{\partial w_2}-3w_4\frac{\partial}{\partial w_3}+3w_3\frac{\partial}{\partial w_4}\\
&+4w_6\frac{\partial}{\partial w_5}-4w_5\frac{\partial}{\partial w_6}+5w_8\frac{\partial}{\partial w_7}-5w_7\frac{\partial}{\partial w_8},\\
&\mbox{Re}(\fg_{1})=\{-2z_1^2\frac{\partial}{\partial z_1}-\frac{3w_2+3iw_1}{2}\frac{\partial}{\partial z_2}-((iw_2+w_1)z_1+2w_3)\frac{\partial}{\partial w_1}\\
&-((w_2-iw_1)z_1+2w_4)\frac{\partial}{\partial w_2}-\frac{(iw_4+w_3)z_1+4w_6}{2}\frac{\partial}{\partial w_3}-\frac{(w_4-iw_3)z_1+4w_5}{2}\frac{\partial}{\partial w_4}\\
&-\frac{(w_4-iw_3)z_1^2-6(iw_6-w_5)z_1}{4}\frac{\partial}{\partial w_5}-\frac{(w_3+iw_4)z_1^2+6(w_6+iw_5)z_1}{4}\frac{\partial}{\partial w_6}\\
&-\frac{i(iw_2+w_1)((iw_4+w_3)z_1+6(w_5-iw_6))}{8}\frac{\partial}{\partial w_7}\\
&+\frac{(iw_2+w_1)((iw_4+w_3)z_1+6(w_5-iw_6))}{8}\frac{\partial}{\partial w_8},\\
&\frac{w_2-iw_1}{2}\frac{\partial}{\partial z_1}-2z_2^2\frac{\partial}{\partial z_2}+(iw_2-w_1)z_2\frac{\partial}{\partial w_1}+i(iw_2-w_1)z_2\frac{\partial}{\partial w_2}\\
&-\frac{i(iw_2-w_1)^2}{2}\frac{\partial}{\partial w_3}+\frac{(iw_2-w_1)^2}{2}\frac{\partial}{\partial w_4}\\
&+\frac{4(iw_6+w_5)z_2-8w_7+3(iw_2-w_1)(iw_4-w_3)}{8}\frac{\partial}{\partial w_5}\\
&+\frac{4(w_6-iw_5)z_2-8w_8-3i(iw_2-w_1)(iw_4-w_3)}{8}\frac{\partial}{\partial w_6}\\
&-\frac{4(iw_6+w_5)z_2^2+4(iw_8+w_7)z_2+3(iw_2-w_1)(iw_4-w_3)z_2+i(iw_2-w_1)^3}{8}\frac{\partial}{\partial w_7}\\
&-\frac{4(w_6-iw_5)z_2^2+4(w_8-iw_7)z_2-3i(iw_2-w_1)(iw_4-w_3)z_2+(iw_2-w_1)^3}{8}\frac{\partial}{\partial w_8}\}.
\end{align*}

\subsection{Example $\fg=\frak{su}(2,3), \Xi=\{1,2,3,4\}$}

In the classification in \cite{MS}, we can find the case of simple real Lie algebra $\fg$ with $l>2$ of the smallest rank, which is $\fg=\frak{su}(2,3), \Xi=\{1,2,3,4\}$. We can compute the defining equations of the corresponding standard real submanifold in $\mathbb{C}^8$:

\begin{align*}
\mbox{Im}(w_1)&=iz_1\bar z_{2}-iz_2\bar z_{1},\
\mbox{Im}(w_2)=-z_1\bar z_{2}-z_2\bar z_{1},\
\mbox{Im}(w_3)=-2z_2\bar z_{2},\\
\mbox{Im}(w_4)&=(z_1+\bar z_{1})z_2\bar z_{2}+\frac{1}{4}(\mbox{Re}(w_2)+i\mbox{Re}(w_1))z_2+\frac{1}{4}(\mbox{Re}(w_2)-i\mbox{Re}(w_1))\bar z_{2}\\
&-\frac{1}{4}i(z_1-\bar z_{1})\mbox{Re}(w_3),\\
\mbox{Im}(w_5)&=-i(z_1-\bar z_{1})z_2\bar z_{2}+\frac{1}{4}(\mbox{Re}(w_2)+i\mbox{Re}(w_1))z_2+\frac{1}{4}(\mbox{Re}(w_2)-i\mbox{Re}(w_1))\bar z_{2}\\
&-\frac{1}{4}(z_1+\bar z_{1})\mbox{Re}(w_3),\\
\mbox{Im}(w_6)&=-\frac{2}{3}(z_2\bar z_{2}+z_2+\bar z_{2})z_1\bar z_{1}-\frac{1}{2}(\mbox{Re}(w_5)-i\mbox{Re}(w_4))\bar z_{1}-\frac{1}{2}(\mbox{Re}(w_5)+i\mbox{Re}(w_4))z_1\\
&+\frac{1}{8}(\mbox{Re}(w_2)+i\mbox{Re}(w_1)-i\mbox{Re}(w_2)+\mbox{Re}(w_1))z_1z_2\\
&+\frac{1}{8}(\mbox{Re}(w_2)-i\mbox{Re}(w_1)+i\mbox{Re}(w_2)+\mbox{Re}(w_1))\bar z_{1}\bar z_{2}\\
&+\frac{1}{8}(3i\mbox{Re}(w_2)-3\mbox{Re}(w_1)-\mbox{Re}(w_2)-i\mbox{Re}(w_1))\bar z_{1}z_2\\
&+\frac{1}{8}(-3i\mbox{Re}(w_2)-3\mbox{Re}(w_1)-\mbox{Re}(w_2)+i\mbox{Re}(w_1))z_1\bar z_{2},
\end{align*}
where $[z_1]=[z_2]=1, [w_1]=[w_2]=[w_3]=2, [w_4]=[w_5]=3,[w_6]=4$, i.e., the Bloom--Graham type is $(2,2,2,3,3,4)$. The Lie algebra of infinitesimal CR automorphisms of this model surface is isomorphic to $\frak{su}(2,3)$ and is $|4|$--graded. Moreover, it is generated by the following weighted homogeneous (for $[\frac{\partial}{\partial z_1}]=[\frac{\partial}{\partial z_2}]=-1,$ $[\frac{\partial}{\partial w_1}]=[\frac{\partial}{\partial w_2}]=[\frac{\partial}{\partial w_3}]=-2$, $[\frac{\partial}{\partial w_4}]=[\frac{\partial}{\partial w_5}]=-3$,$[\frac{\partial}{\partial w_6}]=-4$) holomorphic vector fields on $\mathbb{C}^{8}$:
\begin{align*}
\mbox{Re}(\fg_{-4})&=\fg_{-4}=\{\frac{\partial}{\partial w_6}\},\\
\mathcal{J}&=iz_1\frac{\partial}{\partial z_1}-iz_2\frac{\partial}{\partial z_2}+2w_2\frac{\partial}{\partial w_1}-2w_1\frac{\partial}{\partial w_2}-w_5\frac{\partial}{\partial w_4}+w_4\frac{\partial}{\partial w_5},\\
\mbox{Re}(\fg_{1})&=\{-2z_1^2\frac{\partial}{\partial z_1}-\frac{iw_2+w_1}{2}\frac{\partial}{\partial z_2}+z_1(iw_2-w_1)\frac{\partial}{\partial w_1}-z_1(w_2+iw_1)\frac{\partial}{\partial w_2}\\
&-(2w_4-z_1w_3-z_2(w_2+iw_1))\frac{\partial}{\partial w_3}-(w_6+iz_1w_5+\frac{z_1^2w_3+z_1z_2(w_2+iw_1)}{2}\\
&-\frac{i(w_1^2+w_2^2)}{4})\frac{\partial}{\partial w_4}+(iz_1w_4+\frac{iz_1^2w_3+z_1z_2(iw_2-w_1)}{2}-\frac{w_1^2+w_2^2}{4})\frac{\partial}{\partial w_5}\\
&-(z_1w_6+z_1^2(w_4-iw_5)+iz_1\frac{w_1^2+w_2^2}{2})\frac{\partial}{\partial w_6},\\
&\frac{w_1-iw_2}{2}\frac{\partial}{\partial z_1}+\frac{iw_3-4z_2^2}{2}\frac{\partial}{\partial z_2}+\frac{2w_5-iz_1w_3-z_2(3iw_2+w_1)}{2}\frac{\partial}{\partial w_1}\\
&+\frac{2w_4+z_1w_3-z_2(w_2-3iw_1)}{2}\frac{\partial}{\partial w_2}-2z_2w_3\frac{\partial}{\partial w_3}\\
&-\frac{z_2(iw_5+w_4)+(iw_2+w_1-z_1z_2)w_3-z_2^2(w_2+iw_1)}{2}\frac{\partial}{\partial w_4}\\
&-\frac{z_2(w_5-iw_4)+(-w_2+iw_1+iz_1z_2)w_3+z_2^2(iw_2-w_1)}{2}\frac{\partial}{\partial w_5}\\
&+\frac{((w_2-iw_1)w_5-(iw_2+w_1)w_4+z_1(iw_2+w_1)w_3+iz_2(w_1^2+w_2^2)}{2}\frac{\partial}{\partial w_6}\}.
\end{align*}

\end{document}